\renewcommand{\@seccntformat}[1]{{\csname the#1\endcsname}{\normalsize .}\hspace{.5em}}
\def \[{\begin{equation}}
\def \]{\end{equation}}
\newtheorem{thm}{Theorem}[section]
\newtheorem{fact}{Fact}
\newtheorem{lem}[thm]{Lemma}
\newtheorem{conj}[thm]{Conjecture}
\newenvironment{wst}
{\setlength{\leftmargini}{1.5\parindent}
 \begin{itemize}
 \setlength{\itemsep}{-1.1mm}}
{\end{itemize}}
\begin{document}

\setlength{\baselineskip}{0.20in}
\begin{center}{\Large \bf A  sharp lower bound on the generalized 4-independence number\footnote{J.H.  financially supported by the National Natural Science Foundation of China (Grant No. 12001202), the Guangdong Basic and Applied Basic Research Foundation (Grant No. 2023A1515010838) and the Guangzhou Basic and Applied Basic Research Foundation (Grant No. 2024A04J3328).}}
\vspace{4mm}

{\large Jing Huang\footnote{Corresponding author. \\
\hspace*{5mm}{\it Email addresses}: jhuangmath@foxmail.com (J. Huang).}\vspace{2mm}}

School of Mathematics and Information Science, Guangzhou University, Guangzhou, 510006, China
\end{center}


\begin{abstract}
For a graph $G$, a vertex subset $S$ is called a maximum generalized $k$-independent set if the induced subgraph $G[S]$ does not contain a $k$-tree as its subgraph, and the subset
has maximum cardinality. The generalized $k$-independence number  of $G$, denoted as $\alpha_k(G)$,  is the number of vertices in a  maximum generalized $k$-independent set of $G$. For a graph $G$ with $n$ vertices, $m$ edges, $c$ connected components, and $c_1$ induced cycles of length 1 modulo 3,
Bock et al. [J. Graph Theory 103 (2023) 661-673]
showed that $\alpha_3(G)\geq n-\frac{1}{3}(m+c+c_1)$
and   identified the extremal graphs in which every two cycles are vertex-disjoint.
Li and Zhou [Appl. Math. Comput. 484 (2025) 129018] proved that if $G$ is a tree with $n$ vertices, then $\alpha_4(G) \geq \frac{3}{4}n$. They also presented all the  corresponding extremal trees.
In this paper, for a general graph $G$
with $n$ vertices, it is   proved that $\alpha_4(G)\geq \frac{3}{4}(n-\omega(G))$  by using a  different approach, where $\omega(G)$ denotes
the dimension of the cycle space of $G$. The  graphs whose generalized $4$-independence number attains the lower bound are characterized completely. This represents a logical continuation of the  work by Bock et al. and serves as a natural extension of the result by Li and Zhou.

\end{abstract}

\vspace{2mm} \noindent{\bf Keywords}: Generalized $k$-independence number; Dimension of the cycle space.

\setcounter{section}{0}

\section{\normalsize Introduction}\setcounter{equation}{0}
We start with introducing some background information that leads to our main results. Our main results will also be given in this section.
\subsection{\normalsize Background and definitions}
Let $G=(V_G, E_G)$ be a graph with vertex set $V_G$ and edge set $E_G$. The path, cycle, star,  and complete graph of order $n$ are conventionally
represented by  $P_n, C_n, S_n$ and  $K_n$, respectively.
For a vertex $v\in V_G,$ let $N_G(v)$ be the neighborhood of $v$, and
$N_G[v]=N_G(v)\cup \{v\}$ be the closed neighborhood of $v$.
We call $d_G(v):=|N_G(v)|$ the \textit{degree} of $v$.
A vertex of a graph $G$ is called a \textit{pendant vertex} if it is a vertex with degree one in $G$, whereas a vertex of $G$ is called a \textit{quasi-pendant vertex} if it is adjacent to a pendant vertex in $G$.
Unless stated otherwise, we adhere to the notation and terminology in \cite{C95}.

Denote by $\omega(G)$ the \textit{dimension} of the cycle space of $G$, that is $\omega(G)=|E_G|-|V_G|+c(G)$, where $c(G)$ is the number of connected components of $G$. A simple graph $G$ is called  \textit{acyclic} if it contains no cycles, whereas it is called an \textit{empty graph} if it has no edges.
For an induced subgraph $H$ of $G$, $G-H$ is the subgraph obtained from $G$ by deleting
all vertices of $H$ and all incident edges. For $W\subseteq V_G, G-W$ is the subgraph obtained from $G$ by deleting all
vertices in $W$ and all incident edges. For the sake of simplicity, we use $G-v, G-uv$ to denote the graph obtained from $G$ by deleting vertex $v \in V_G$, or edge $uv \in E_G$, respectively.
For two graphs $G_1$ and $G_2$, denote by $G_1 \cup G_2$ the  disjoint union of $G_1$ and $G_2$.
 For simplicity, we use $kG$ to denote the disjoint union of $k$ copies of $G$.

For a positive integer $k\geq 2$ and a subset $S\subseteq V_G$, we call $S$ a
generalized $k$-independent set if the induced subgraph $G[S]$ does not
contain a $k$-tree (a tree with $k$ vertices) as a subgraph. A maximum
generalized $k$-independent set of $G$ is a generalized $k$-independent
set with the maximum cardinality. The generalized $k$-independence number of
$G$, written as $\alpha_k(G)$, is the cardinality of a maximum
generalized $k$-independent set of $G$. Clearly, $\alpha_2(G)$ is
exactly the \textit{independence number}, whereas $\alpha_3(G)$ is called
the \textit{dissociation number} which was originally raised
by Yannakakis \cite{Y81} in 1981, and in the same paper, he showed
the  problem of computing dissociation number is NP-complete for bipartite graphs.
Cameron and Hell \cite{C06} showed that the problem can be solved in
polynomial time for some important classes of graphs such as
chordal graphs, weakly chordal graphs, asteroidal triple-free graphs,
and interval-filament graphs.
For more algorithms on the  dissociation number  one may be
referred to \cite{A07,B04,HB22,KK11,O11,T19}.

The problem concerning the bound of dissociation number in a given class of graphs is a
classical  problem and has been extensively studied. In 2009,  G\"{o}ring et al. \cite{GH09}
showed that
$$
\alpha_3(G)\geq\sum_{u\in V_G}\frac{1}{d_G(u)+1}+\sum_{uv\in E_G}\binom{|N_G[u]\cup N_G[v]|}{2}^{-1}.
$$
For a graph $G$ with $n$ vertices and
$m$ edges, Bre\v{s}ar et al. \cite{B11} pointed  out that $\alpha_3(G)\geq\frac{2n}{3}-\frac{m}{6}.$
Furthermore, they also proved that
\begin{equation*}
    \alpha_3(G)\geq
    \left\{
    \begin{array}{ll}
        \frac{n}{\left\lceil\frac{\Delta+1}{2}\right\rceil},& \textrm{if $G$ has maximum degree $\Delta,$}\\[8pt]
        \frac{4}{3}\sum_{u\in V_G}\frac{1}{d_G(u)+1},& \textrm{if $G$ has no isolated vertex,}\\[5pt]
        \frac{n}{2},& \textrm{if $G$ is outerplanar,}\\[5pt]
        \frac{2n}{3},& \textrm{if $G$ is a tree.}
    \end{array}
    \right.
\end{equation*}
Bre\v{s}ar et al. \cite{BJ13}  further  demonstrated that $\alpha_3(G)\geq\frac{2n}{k+2}-\frac{m}{(k+1)(k+2)},$ where
$k=\left\lceil\frac{m}{n}\right\rceil-1.$
Li and Sun \cite{LS23}  proved that
$\alpha_3(F)\geq \frac{2n}{3}$ for each
acyclic graph $F$ with order $n$. They also characterized all the corresponding
extremal acyclic graphs.
Bock et al. \cite{BP23a} generalized the result and  showed that if $G$ is a graph with
$n$ vertices, $m$ edges, $k$ components, and $c_1$ induced cycles of length 1 modulo 3, then
$\alpha_3(G)\geq n-\frac{1}{3}(m+k+c_1)$, the extremal graphs in which every two cycles are
vertex-disjoint were identified. In another paper, they \cite{BP23b}  provided several upper bounds on the dissociation number by utilizing  independence number in some specific classes of graphs, including bipartite graphs,  triangle-free graphs and subcubic graphs. The extremal graphs that reach the partial bounds were
characterized.

Since generalized $4$-independence  number is   a natural generalization of the independence number and the dissociation number, there has been a growing interest in the study of the generalized $4$-independence number.
Inspired by the work of \cite{TZS}, Li and Xu \cite{LX24} determined all the trees having the maximum number of maximum generalized $4$-independent sets among trees with given order. Li and Zhou \cite{LZ25} established a sharp lower bound on the generalized $4$-independence number of a tree with fixed order and characterized all the corresponding extremal trees.
In this paper, we present a sharp lower bound for the generalized $4$-independence number of a general graph using a novel approach. Additionally, we fully characterize the extremal graphs for which the generalized $4$-independence number reaches this lower bound.

\subsection{\normalsize Main results}

In this subsection, we give some basic notation and then describe our main result. For each positive integer $i,$ Li and Zhou \cite{LZ25} constructed a sequence of trees $R_i$ with order $4i$ as follows:
\begin{wst}
\item[{\rm (i)}] $R_1\in \{P_4,S_4\};$
\item[{\rm (ii)}] If $i\geq 2,$ then $R_{i}$ is obtained by adding an edge to connect a vertex of one member of  $R_{i-1}$ and a vertex of $P_4$ or $S_4.$
\end{wst}

 Li and Zhou established a lower bound on the generalized $4$-independence number of a tree with fixed order, and all the corresponding extremal trees were characterized, as stated below.

\begin{thm}[\cite{LZ25}]\label{thm1.1}
Let $T$ be a tree on $n$ vertices. Then $\alpha_4(T)\geq \frac{3n}{4}$ with equality if and only if $n\equiv 0\pmod{4}$ and $T\in R_{\frac{n}{4}}.$
\end{thm}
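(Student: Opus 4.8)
The first step is to restate the quantity in a form amenable to induction. Since every connected graph on at least four vertices contains a spanning tree and hence a $4$-vertex subtree, a set $S\subseteq V_T$ is generalized $4$-independent if and only if every component of $T[S]$ has at most three vertices. Writing $\beta_4(T)=n-\alpha_4(T)$ for the minimum number of vertices whose deletion leaves all components of order at most $3$, the bound $\alpha_4(T)\geq \tfrac34 n$ is equivalent to $\beta_4(T)\leq \tfrac14 n$. The plan is to produce an explicit deletion set of size at most $n/4$ by a bottom-up greedy rule, and then to read off the equality case from the tightness of the associated counting argument.

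Concretely, I would root $T$ at an arbitrary vertex $r$ and process the vertices in post-order, assigning to each vertex a status ``deleted'' or a \emph{pending size} in $\{1,2,3\}$ equal to the order of the component containing it in the part of the subtree already kept. At a vertex $v$ with children $c_1,\dots,c_d$, let $s$ be $1$ plus the sum of the pending sizes of the \emph{kept} children of $v$; if $s\leq 3$ I keep $v$ and give it pending size $s$, and if $s\geq 4$ I delete $v$, which finalizes the kept children's components (each of order $\leq 3$) and gives $v$ pending size $0$. By construction no component ever reaches order $4$, so the kept set is generalized $4$-independent. To bound the number of deletions $|D|$, assign to each deleted vertex $v$ the block $B_v$ consisting of $v$ together with the kept children-components merged at $v$; the deletion rule gives $|B_v|\geq 4$. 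The key combinatorial point is that the blocks $B_v$ are pairwise disjoint: once a component is finalized at a deletion it is never pending again, and a deleted vertex contributes pending size $0$ to its parent, so no vertex lies in two blocks. Hence $4\,\beta_4(T)\leq 4|D|\leq\sum_v|B_v|\leq n$, which is exactly $\beta_4(T)\leq n/4$.

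For the equality case I would exploit that this chain of inequalities is tight only in a very rigid way. If $\alpha_4(T)=\tfrac34 n$ then $\beta_4(T)=n/4$ is an integer, forcing $4\mid n$; moreover the greedy set $D$ is then itself minimum, so $|D|=n/4$ and the estimates $4|D|\leq\sum|B_v|\leq n$ collapse to equalities. This forces every block $B_v$ to have exactly four vertices and the blocks to partition $V_T$. Each $B_v$ is a connected $4$-vertex subtree, hence a copy of $P_4$ or $S_4$. Contracting each block to a point turns $T$ into a connected graph on $n/4$ nodes; since $T$ is a tree there is at most one edge between any two blocks, and counting edges ($3$ inside each of the $n/4$ blocks) leaves exactly $n/4-1$ edges between blocks, so the blocks form a tree joined by single edges, which is precisely the recursive construction of $R_{n/4}$. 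Conversely, a tree $T\in R_{n/4}$ comes equipped with $n/4$ vertex-disjoint connected $4$-vertex subtrees, namely its blocks; any generalized $4$-independent set must omit a vertex from each such subtree, so $\beta_4(T)\geq n/4$, and together with the general bound this yields equality. This gives the full ``if and only if''.

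The main obstacle, I expect, is not the counting itself but the verification that the greedy rule is well defined and genuinely valid in all local configurations — in particular when a vertex has several kept children whose pending sizes combine past $3$ — together with the proof that the charging blocks are pairwise disjoint, which is the linchpin of the whole argument. The second delicate point lies in the equality analysis: one must check that the block partition produced by the greedy procedure really does match the inductive definition of $R_i$ (single connecting edges, each block a $P_4$ or $S_4$), and that any tree admitting such a partition is necessarily obtained by the stated block-by-block construction. This is exactly where the tree structure — unique paths, hence at most one edge between blocks — does the essential work.
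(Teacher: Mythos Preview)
The paper does not supply its own proof of Theorem~\ref{thm1.1}; the result is quoted from Li and Zhou~\cite{LZ25} as background, so there is nothing in this paper to compare your argument against directly. Your greedy-plus-charging proof is correct in outline. The post-order rule never lets a kept component reach size four (any kept vertex had $s\leq 3$ at the moment it was processed), the blocks $B_v$ are pairwise disjoint (a kept vertex lies in the block of its nearest deleted ancestor and in no other, while a deleted vertex lies only in its own block), and in the equality case each $B_v$ is indeed a connected four-vertex induced subtree of $T$. Since two disjoint connected vertex sets in a tree are joined by at most one edge, contracting the blocks yields a simple tree on $n/4$ nodes, and peeling off leaf blocks recovers the recursive description of $R_{n/4}$. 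The converse is immediate from the block decomposition, exactly as you say.

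For context, the style of argument this paper uses for its own extensions (notably Lemma~\ref{lem3.6}) is rather different in flavour: a structural induction that repeatedly strips off small configurations near pendant and quasi-pendant vertices of the contracted tree $T_G$, controlled by Lemma~\ref{lem2.4}. Your charging argument is more direct and self-contained for the tree case; the paper's structural approach is tailored to interface with the cycle structure needed for general graphs.
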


If $G$ is an $n$-vertex disconnected  graph with
$G=\bigcup_{i=1}^kG_i$, where $G_i$ is a  component of $G$ with order $n_i\ (1\leq i\leq k)$, then it is obvious that
$$
\sum_{i=1}^kn_i=n,\ \ \sum_{i=1}^k\alpha_4(G_i)=\alpha_4(G),
$$
and thus we arrive at the following result immediately.

\begin{thm}\label{thm1.2}
Let $F$ be  an acyclic graph with $n$ vertices. Then $\alpha_4(F)\geq \frac{3n}{4}$ with equality if and only if each component, say $T,$ of $F$ satisfies $|V_T|\equiv 0\pmod{4}$ and $T\in R_{\frac{|V_T|}{4}}.$
\end{thm}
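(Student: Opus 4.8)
The plan is to deduce the forest case directly from the tree case of Theorem~\ref{thm1.1}, using the additivity of $\alpha_4$ across connected components that is recorded immediately above the statement. Since $F$ is acyclic, every connected component is a tree; write $F=\bigcup_{i=1}^{k}T_i$, where $T_i$ is a tree on $n_i$ vertices, so that $\sum_{i=1}^{k}n_i=n$ and, by the displayed additivity identity, $\alpha_4(F)=\sum_{i=1}^{k}\alpha_4(T_i)$.

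First I would apply Theorem~\ref{thm1.1} componentwise to obtain $\alpha_4(T_i)\ge \tfrac{3n_i}{4}$ for each $i$. Summing over $i$ then yields
\[
\alpha_4(F)=\sum_{i=1}^{k}\alpha_4(T_i)\ge \frac34\sum_{i=1}^{k}n_i=\frac{3n}{4},
\]
which is the required lower bound.

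For the equality analysis, the key point is that every summand satisfies $\alpha_4(T_i)-\tfrac{3n_i}{4}\ge 0$, so their sum vanishes exactly when each individual term vanishes. Hence $\alpha_4(F)=\tfrac{3n}{4}$ holds if and only if $\alpha_4(T_i)=\tfrac{3n_i}{4}$ for every $i$, and by the equality characterization in Theorem~\ref{thm1.1} this is equivalent to $n_i\equiv 0\pmod{4}$ and $T_i\in R_{n_i/4}$ for each component $T_i$, which is precisely the claimed description of the extremal forests. I do not anticipate a substantive obstacle, since the whole argument is a routine componentwise reduction to the tree result; the only point meriting care is the additivity of $\alpha_4$ over components. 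That additivity holds because a $4$-tree is connected, so any copy of it inside $F[S]$ must lie in a single component, whence $S$ is generalized $4$-independent in $F$ precisely when its trace $S\cap V_{T_i}$ is generalized $4$-independent in each $T_i$; maximizing over the components separately then gives $\alpha_4(F)=\sum_{i=1}^{k}\alpha_4(T_i)$, as asserted in the excerpt.
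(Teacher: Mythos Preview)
Your proof is correct and follows exactly the same approach as the paper: the paper derives Theorem~\ref{thm1.2} immediately from Theorem~\ref{thm1.1} via the additivity $\alpha_4(F)=\sum_i\alpha_4(T_i)$ over components, and your write-up makes the same componentwise summation and equality analysis explicit.
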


Let $G$ be a graph with pairwise vertex-disjoint cycles, and let $\mathscr{C}_G$ denote the set of all cycles in $G$. By shrinking each cycle of $G$ (that is, contracting each cycle to a single vertex) we obtain an acyclic graph $T_G$ from $G$. More definitely, the vertex set $V_{T_G}=U_G\bigcup W_{\mathscr{C}_G},$ where $U_G$ consists of all vertices of $G$ that do not lie on any cycle and $W_{\mathscr{C}_G}$ consists of all the vertices each of which is obtained by shrinking a cycle in $\mathscr{C}_G$. Two vertices in $U_G$ are adjacent in $T_G$ if and only if they are adjacent in $G$, a vertex $u\in U_G$ is adjacent to a vertex $v_C\in W_{\mathscr{C}_G}$ if and only if $u$ is adjacent (in $G$) to a vertex in the cycle $C$, and two vertices $v_{C^1}, v_{C^2}$ in $W_{\mathscr{C}_G}$ are adjacent in $T_G$ if and only if there exists an edge in $G$ joining a vertex of $C^1\in \mathscr{C}_G$ to a vertex of $C^2\in \mathscr{C}_G.$  Observe that the graph $T_G-W_{\mathscr{C}_G}$ is the same as the graph obtained from $G$ by deleting all the vertices on cycles and their incident edges, the resultant graph is denoted by $\Gamma_G.$ Figure \ref{fig1} gives an example for $G, T_G$ and $\Gamma_G.$

Our main result extends Theorem \ref{thm1.2} to general graphs, which establishes a sharp lower bound on the generalized 4-independence number and characterizes all the corresponding extremal graphs.
\begin{thm}\label{thm1.4}
Let $G$ be an $n$-vertex  graph with the dimension of cycle space $\omega(G)$. Then
\begin{equation}\label{eq1.1}
\alpha_4(G)\geq \frac{3}{4}[n-\omega(G)]
\end{equation}
with equality if and only if all the following conditions hold for $G$
\begin{wst}
\item[{\rm (i)}] the cycles (if any) of $G$ are pairwise vertex-disjoint;
\item[{\rm (ii)}] the order of each cycle (if any) of $G$ is
$4k+1,$ where $k$ is an integer;
\item[{\rm (iii)}] each component, say $T,$  of $\Gamma_G$ satisfies $|V_T|\equiv 0\pmod{4}$ and $T\in R_{\frac{|V_T|}{4}}.$
\end{wst}
\end{thm}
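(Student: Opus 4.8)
\medskip
\noindent\textit{Proof plan.} My starting observation is the combinatorial reformulation that a set $S\subseteq V_G$ is generalized $4$-independent if and only if every connected component of $G[S]$ has at most three vertices: a connected graph on at least four vertices contains a spanning tree and hence a $4$-tree, whereas a graph all of whose components have order at most three contains none. From this, two monotonicity facts are immediate. If $S$ is generalized $4$-independent in $G-v$, then (as $v\notin S$) $G[S]=(G-v)[S]$, so $S$ is generalized $4$-independent in $G$; hence $\alpha_4(G)\ge\alpha_4(G-v)$. Likewise $(G-e)[S]$ is a subgraph of $G[S]$, so deleting an edge cannot decrease $\alpha_4$. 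I would then prove \eqref{eq1.1} by induction on $\omega(G)$. The base case $\omega(G)=0$ is exactly Theorem \ref{thm1.2}. When $\omega(G)\ge1$, choose any vertex $v$ on a cycle; since cycle rank is additive over the blocks of $G$ and a block minus one of its vertices stays connected, one computes $\omega(G)-\omega(G-v)=\sum_{B\ni v}(d_B(v)-1)\ge1$ for such a $v$. Applying the induction hypothesis to $G-v$ together with $\alpha_4(G)\ge\alpha_4(G-v)$ then gives $\alpha_4(G)\ge\frac{3}{4}[(n-1)-\omega(G-v)]\ge\frac{3}{4}[n-\omega(G)]$.

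For the sufficiency of (i)--(iii) the main tool is an edge-deletion bound. When the cycles of $G$ are pairwise vertex-disjoint, deleting every edge that joins two distinct cycles or joins a cycle to $U_G$ leaves precisely the disjoint union $\big(\bigsqcup_{C\in\mathscr{C}_G}C\big)\sqcup\Gamma_G$, because the surviving edges are exactly the cycle-internal edges and the edges of $\Gamma_G=G[U_G]$. Since edge deletion does not decrease $\alpha_4$ and $\alpha_4$ is additive over components,
\begin{equation*}
\alpha_4(G)\le\sum_{C\in\mathscr{C}_G}\alpha_4(C)+\alpha_4(\Gamma_G).
\end{equation*}
A short arc-counting argument shows that for $g\ge4$ one has $\alpha_4(C_g)=g-\lceil g/4\rceil$, which equals $\frac{3}{4}(g-1)$ exactly when $g\equiv1\pmod4$; thus (ii) gives $\alpha_4(C)=\frac{3}{4}(|C|-1)$, while (iii) together with Theorem \ref{thm1.2} gives $\alpha_4(\Gamma_G)=\frac{3}{4}|V_{\Gamma_G}|$. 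Substituting into the displayed bound and using $\sum_{C}|C|+|V_{\Gamma_G}|=n$ and $|\mathscr{C}_G|=\omega(G)$ yields $\alpha_4(G)\le\frac{3}{4}[n-\omega(G)]$, which combined with \eqref{eq1.1} gives equality.

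For necessity, assume $\alpha_4(G)=\frac{3}{4}[n-\omega(G)]$. Running the chain $\frac{3}{4}[n-\omega(G)]=\alpha_4(G)\ge\alpha_4(G-v)\ge\frac{3}{4}[(n-1)-\omega(G-v)]$ for an arbitrary cycle vertex $v$ forces $\omega(G-v)=\omega(G)-1$, i.e. $\sum_{B\ni v}(d_B(v)-1)=1$, for every such $v$. A block $B$ with $\omega(B)\ge2$ would contain a vertex of degree at least $3$ in $B$ (as $2|E_B|>2|V_B|$), at which this sum is at least $2$; hence every block of $G$ is a bridge or a single cycle, and the equality $\sum_{B\ni v}(d_B(v)-1)=1$ then forces each cycle vertex to lie on exactly one cycle. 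This is condition (i). Next, for any transversal $W$ picking one vertex from each cycle, $G-W$ is a forest on $n-\omega(G)$ vertices, and $\alpha_4(G)\ge\alpha_4(G-W)\ge\frac{3}{4}[n-\omega(G)]$ forces $\alpha_4(G-W)=\frac{3}{4}[n-\omega(G)]$, so by Theorem \ref{thm1.2} every component of $G-W$ is a tree $T$ with $|V_T|\equiv0\pmod4$ and $T\in R_{|V_T|/4}$. Letting $W$ range over all transversals---in particular taking the removed vertex of a cycle to be one carrying an attached tree---is intended to transfer these order and structural constraints back to the individual cycles and to $\Gamma_G$, delivering (ii) and (iii).

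The hard part is this final step of necessity: converting ``$G-W$ is extremal for every transversal $W$'' into the two local statements (ii) and (iii). Deleting one vertex of a cycle turns that cycle into a path which fuses with the trees attached to it (and, across bridges, with neighbouring material), so the components of $G-W$ differ genuinely from those of $\Gamma_G$; one must show that forcing each such fused tree into the family $\{R_i\}$ for every choice of $W$ simultaneously pins down the residue of every cycle length modulo $4$ and the exact isomorphism type of every component of $\Gamma_G$. I expect the most delicate ingredient to be a careful understanding of how the extremal trees $R_i$ behave under attaching a path at a prescribed vertex.
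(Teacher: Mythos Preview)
Your proofs of the inequality \eqref{eq1.1}, of the sufficiency direction, and of condition (i) in necessity are correct and essentially match the paper's arguments (the paper's Lemmas~3.1--3.3 and the opening of Lemma~3.6 do the same things). The genuine gap is in extracting (ii) and (iii), which you flag as the hard part but do not carry out. Your worry about ``fused trees'' is in fact less serious than you think, at least for (iii): once (i) is established, each component $T$ of $\Gamma_G$ is joined to each cycle $C$ by at most one edge (two such edges would create a second cycle through vertices of $C$, violating (i)), so if for every cycle $C$ adjacent to $T$ you take the transversal vertex in $W$ to be the unique vertex of $C$ having a neighbour in $T$, then every edge leaving $T$ in $G$ ends in $W$, and $T$ is an entire component of the good forest $G-W$; Theorem~\ref{thm1.2} then gives $T\in R_{|V_T|/4}$ immediately. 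For (ii), once (iii) is in hand you have $|V_{\Gamma_G}|\equiv0\pmod4$, so when $\omega(G)=1$ the integrality of $\alpha_4(G)=\tfrac{3}{4}(n-1)$ forces $n\equiv1\pmod4$ and hence $|V_C|=n-|V_{\Gamma_G}|\equiv1$; when $\omega(G)\ge2$ your equality chain already shows that $G-v$ is good with cycle rank $\omega(G)-1$ for every cycle vertex $v$, and induction on $\omega$ applied to two such vertices lying on distinct cycles handles every cycle. So your transversal strategy can be completed without any study of how $R_i$-trees behave under attaching paths.

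By contrast, the paper does not vary transversals at all. It proves necessity by a structural induction on $n$ (Lemma~3.6): it examines a deepest leaf of the contracted tree $T_G$ and, through a case analysis on its degree and that of its parent and grandparent, peels off either a pendant cycle or a small subtree of $\Gamma_G$, shows the remainder is again good, and invokes the induction hypothesis. That route is more laborious; your transversal idea, once completed as above, is cleaner.
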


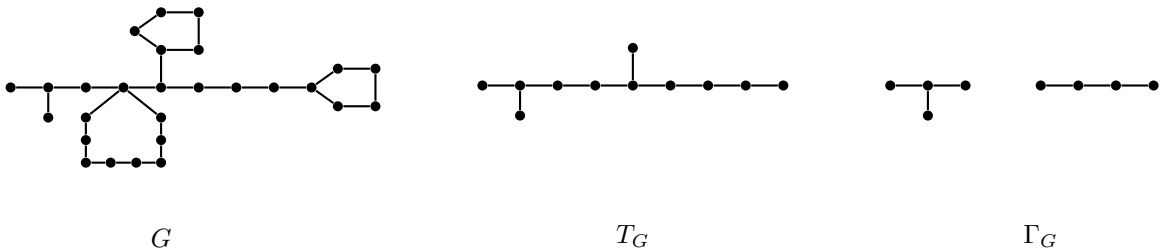
\begin{figure}[!ht]
\centering
  \begin{tikzpicture}[scale = 1.0]
  \tikzstyle{vertex}=[circle,fill=black,minimum size=0.38em,inner sep=0pt]
  \node[vertex] (G_0) at (0,0){};
  \node[vertex] (G_1) at (0.5,0){};
  \node[vertex] (G_2) at (1,0){};
  \node[vertex] (G_31) at (1.5,0){};
  \node[vertex] (G_6) at (2,0){};
  \node[vertex] (G_7) at (2.35,0.25){};
  \node[vertex] (G_9) at (2.35,-0.25){};
   \node[vertex] (G_26) at (2.85,0.25){};
  \node[vertex] (G_27) at (2.85,-0.25){};
  \node[vertex] (G_10) at (0,0.5){};
  \node[vertex] (G_11) at (-0.35,0.75){};
  \node[vertex] (G_12) at (0,1){};
  \node[vertex] (G_28) at (0.5,1){};
  \node[vertex] (G_29) at (0.5,0.5){};
  \node[vertex] (G_14) at (-0.5,0){};
  \node[vertex] (G_15) at (-1,0){};
  \node[vertex] (G_16) at (-1.5,0){};
  \node[vertex] (G_30) at (-1.5,-0.4){};
  \node[vertex] (G_17) at (-2,0){};
  \node[vertex] (G_18) at (-0.67,-1){};
  \node[vertex] (G_19) at (-0.33,-1){};
  \node[vertex] (G_20) at (-1,-0.4){};
  \node[vertex] (G_21) at (-1,-0.7){};
  \node[vertex] (G_22) at (-1,-1){};
  \node[vertex] (G_23) at (0,-1){};
  \node[vertex] (G_24) at (0,-0.7){};
  \node[vertex] (G_25) at (0,-0.4){};
  \draw[thick] (G_17)--(G_16) -- (G_15)--(G_14)--(G_0)--(G_1)--(G_2)--(G_31)--(G_6)
  --(G_7)--(G_26)--(G_27)--(G_9)--(G_6);
  \draw[thick] (G_0)--(G_10) -- (G_11)--(G_12)--(G_28)--(G_29)--(G_10);
  \draw[thick] (G_14)--(G_20) -- (G_21)--(G_22)--(G_18)--(G_19)--(G_23)
  --(G_24)--(G_25)--(G_14);
  \draw[thick] (G_16)--(G_30);
  \draw (0,-2.0)node{$G$};
  \end{tikzpicture}
  \hspace{3em}
  \begin{tikzpicture}[scale = 1.0]
  \tikzstyle{vertex}=[circle,fill=black,minimum size=0.38em,inner sep=0pt]
  \node[vertex] (G_0) at (0,0){};
  \node[vertex] (G_1) at (0.5,0){};
  \node[vertex] (G_2) at (1,0){};
  \node[vertex] (G_31) at (1.5,0){};
  \node[vertex] (G_6) at (2,0){};
  \node[vertex] (G_10) at (0,0.5){};
  \node[vertex] (G_14) at (-0.5,0){};
  \node[vertex] (G_15) at (-1,0){};
  \node[vertex] (G_16) at (-1.5,0){};
  \node[vertex] (G_30) at (-1.5,-0.4){};
  \node[vertex] (G_17) at (-2,0){};
  \draw[thick] (G_17)--(G_16) -- (G_15)--(G_14)--(G_0)--(G_1)--(G_2)--(G_31)--(G_6);
  \draw[thick] (G_0)--(G_10);
  \draw[thick] (G_16)--(G_30);
  \draw (0,-2.0)node{$T_G$};
  \end{tikzpicture}
  \hspace{3em}
  \begin{tikzpicture}[scale = 1.0]
  \tikzstyle{vertex}=[circle,fill=black,minimum size=0.38em,inner sep=0pt]
  \node[vertex] (G_0) at (0,0){};
  \node[vertex] (G_1) at (0.5,0){};
  \node[vertex] (G_2) at (1,0){};
  \node[vertex] (G_31) at (1.5,0){};
  \node[vertex] (G_15) at (-1,0){};
  \node[vertex] (G_16) at (-1.5,0){};
  \node[vertex] (G_30) at (-1.5,-0.4){};
  \node[vertex] (G_17) at (-2,0){};
  \draw[thick] (G_17)--(G_16) -- (G_15);
  \draw[thick] (G_0)--(G_1)--(G_2)--(G_31);
  \draw[thick] (G_16)--(G_30);
  \draw (0,-2.0)node{$\Gamma_G$};
  \end{tikzpicture}
  \caption{Graphs $G, T_G$ and $\Gamma_G$}\label{fig1}
\end{figure}

For example, if $G$ is the graph as depicted in Figure \ref{fig1}, then $G$ satisfies Theorem \ref{thm1.4} (i)-(iii) and $\alpha_4(G)=\frac{3}{4}[n-\omega(G)]$ holds, where $\alpha_4(G)=18, n=27$ and $\omega(G)=3$.

\section{\normalsize Preliminary results}\label{sec2}
In this section, we present some initial findings that will serve as the basis for proving our main result.
 The following  results
immediately follow from the definition of the  generalized $4$-independence number.
\begin{lem}\label{lem2.1}
Let $G=(V_G,E_G)$ be a simple graph. Then
\begin{wst}
\item[{\rm (i)}] $\alpha_4(G)-1\leq\alpha_4(G-v)\leq\alpha_4(G)$ for any $v\in V_G;$
\item[{\rm (ii)}] $\alpha_4(G-e)\geq \alpha_4(G)$ for any $e\in E_G$.
\end{wst}
\end{lem}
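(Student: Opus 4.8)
The plan is to establish all three inequalities directly from the definition, using the single structural principle that the defining condition ``$G[S]$ contains no $4$-tree as a subgraph'' is \emph{monotone}: deleting vertices or edges can only destroy subgraphs and never creates new ones, so this property is inherited by every graph obtained from $G[S]$ by such deletions. Each inequality is then proved by transporting a maximum feasible set from one of the two graphs to the other and verifying that it remains feasible, with the cardinalities compared directly.

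First I would handle part (ii) and the upper bound in part (i), which are the easy directions. For (ii), let $S$ be a maximum generalized $4$-independent set of $G$. The induced subgraph $(G-e)[S]$ differs from $G[S]$ only by the possible removal of the edge $e$ (which occurs precisely when both endpoints of $e$ lie in $S$); in every case $(G-e)[S]$ is a subgraph of $G[S]$ and therefore still contains no $4$-tree. Hence $S$ remains a generalized $4$-independent set of $G-e$, giving $\alpha_4(G-e)\ge|S|=\alpha_4(G)$. For the upper bound in (i), let $S'$ be a maximum generalized $4$-independent set of $G-v$. Since $v\notin V_{G-v}$ we have $v\notin S'$, so $G[S']=(G-v)[S']$ contains no $4$-tree, which makes $S'$ feasible in $G$; thus $\alpha_4(G)\ge|S'|=\alpha_4(G-v)$.

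It remains to prove the lower bound $\alpha_4(G-v)\ge\alpha_4(G)-1$ in (i). Here I would take a maximum generalized $4$-independent set $S$ of $G$ and consider $S\setminus\{v\}\subseteq V_{G-v}$. The key observation is that $(G-v)[S\setminus\{v\}]=G[S\setminus\{v\}]$ is an induced subgraph of $G[S]$, obtained by deleting the single vertex $v$; since $G[S]$ contains no $4$-tree, neither does this induced subgraph. Therefore $S\setminus\{v\}$ is a generalized $4$-independent set of $G-v$, and $\alpha_4(G-v)\ge|S\setminus\{v\}|\ge|S|-1=\alpha_4(G)-1$, where the loss of at most one vertex accounts for the subtracted $1$.

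There is no genuine obstacle in this lemma; it is essentially a formalization of the monotonicity of the ``no-$4$-tree'' property under vertex and edge deletion. The only points requiring a moment's care are the bookkeeping in (ii)---distinguishing whether $e$ has zero, one, or two endpoints in $S$, all of which leave $(G-e)[S]$ a subgraph of $G[S]$---and the observation in the upper bound of (i) that a maximum feasible set $S'$ of $G-v$ automatically avoids $v$, so that it can be reused verbatim as a feasible set in $G$.
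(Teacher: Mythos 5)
Your proof is correct and matches the paper's treatment: the paper states this lemma without proof as an immediate consequence of the definition, and your argument is exactly the straightforward verification via monotonicity of the ``no $4$-tree'' property under vertex and edge deletion. Nothing further is needed.
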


\begin{lem}\label{lem2.2}
Let $P_n$ and $C_n$ be a path and a cycle on $n$ vertices, respectively. Then
$\alpha_4(P_n)=\left\lceil\frac{3}{4}n\right\rceil$ and 
$\alpha_4(C_n)=\left\lfloor\frac{3}{4}n\right\rfloor.$
\end{lem}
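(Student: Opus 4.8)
The plan is to first translate the defining condition into a purely combinatorial statement about forbidden runs of consecutive vertices, and then to handle the two formulas by an explicit periodic construction for the lower bounds and a covering/counting argument for the upper bounds.

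I would begin with the reformulation, as it converts both claims into counting problems. The two trees on four vertices are $P_4$ and the star $S_4$, but in fact any graph $H$ contains some $4$-tree as a subgraph if and only if $H$ has a connected subgraph on four vertices, equivalently, some component of $H$ has at least four vertices (grow a subtree of a spanning tree of that component up to four vertices). Hence a set $S$ is generalized $4$-independent if and only if every component of the induced subgraph $G[S]$ has at most three vertices. For $P_n$ and $C_n$ the components of $G[S]$ are precisely the maximal runs of consecutively chosen vertices, read linearly for the path and cyclically for the cycle, so $S$ is admissible exactly when it contains no four consecutive chosen vertices (the all-chosen configuration for $C_n$ with $n\ge 4$ being subsumed by this).

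For the path I would prove both bounds directly. For the lower bound I exhibit the period-four pattern consisting of three chosen vertices followed by one deleted vertex, repeated along $P_n$ and truncated at the end; a short check on the four residues of $n\bmod 4$ shows this chooses exactly $\lceil \tfrac34 n\rceil$ vertices and never creates four consecutive chosen ones, so $\alpha_4(P_n)\ge\lceil\tfrac34 n\rceil$. For the matching upper bound I partition $\{1,\dots,n\}$ into $\lfloor n/4\rfloor$ consecutive blocks of size four followed by a tail of length $n\bmod 4$; admissibility forces each full block of four consecutive vertices to omit at least one vertex, so any admissible $S$ deletes at least $\lfloor n/4\rfloor$ vertices, giving $\alpha_4(P_n)\le n-\lfloor n/4\rfloor=\lceil\tfrac34 n\rceil$. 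Combining the two yields the path formula.

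The cycle requires the same construction but a genuinely sharper upper bound, and this is where I expect the main obstacle. The lower bound again wraps the ``three chosen, one deleted'' pattern around $C_n$, arranging the short tail so that no four consecutive chosen vertices appear across the seam; a residue check gives $\lfloor\tfrac34 n\rfloor$. The delicate point is the upper bound: deleting one edge turns $C_n$ into $P_n$, so Lemma~\ref{lem2.1}(ii) only yields $\alpha_4(C_n)\le\alpha_4(P_n)=\lceil\tfrac34 n\rceil$, which overshoots by one whenever $4\nmid n$. To recover the extra unit I would use a double-counting argument over the $n$ cyclic windows $W_i=\{i,i{+}1,i{+}2,i{+}3\}$ with indices taken modulo $n$: admissibility forces each $W_i$ to contain a deleted vertex, while each vertex lies in exactly four such windows once $n\ge 4$. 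Counting incidences between the windows and the deleted set $U$ gives $4\,|U|\ge n$, hence $|U|\ge\lceil n/4\rceil$ and $\alpha_4(C_n)\le n-\lceil n/4\rceil=\lfloor\tfrac34 n\rfloor$, matching the construction. The windowing step presumes $n\ge 4$ (so that the four windows through each vertex are distinct), which is exactly the range of cycles relevant to the subsequent results; the degenerate case $n=3$, where the induced triangle carries only three vertices, I would dispose of by the trivial observation that no $4$-tree can occur.
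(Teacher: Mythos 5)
Your proposal is correct in substance, and it is worth noting that the paper offers no argument at all for this lemma: it is stated as one of the results that ``immediately follow from the definition,'' so your write-up supplies a proof where the paper has none. Your reduction --- $S$ is generalized $4$-independent iff every component of $G[S]$ has at most three vertices, which for $P_n$ and $C_n$ means no four consecutively chosen vertices --- is exactly the right reformulation, and the three counting steps (the periodic ``three in, one out'' construction for both lower bounds, the block partition for the path upper bound, and the double count over the $n$ cyclic windows giving $4|U|\ge n$ for the cycle upper bound) are all sound. The window argument is precisely the extra idea needed to beat the bound $\alpha_4(C_n)\le\alpha_4(P_n)=\lceil\tfrac34 n\rceil$ obtained from Lemma \ref{lem2.1}(ii) by deleting an edge, which, as you observe, overshoots by one whenever $4\nmid n$.

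One correction, however: your treatment of $n=3$ does not close that case --- it refutes it. The observation that no $4$-tree can occur in a triangle gives $\alpha_4(C_3)=3$, whereas the stated formula gives $\lfloor 9/4\rfloor=2$; so the cycle formula is simply false at $n=3$, and no argument can ``dispose of'' this case in the formula's favour. The lemma must be read with the implicit restriction $n\ge 4$ for cycles, which is also exactly the range your window argument requires (each vertex must lie in four distinct windows). This defect is harmless for the paper, since the cycle formula is only ever invoked for cycles of length $\equiv 1\pmod 4$, hence of length at least $5$ (as in Facts \ref{fact1} and \ref{fact2} and the proof of Theorem \ref{thm1.4}), but your proof should state the hypothesis $n\ge 4$ explicitly for the cycle claim rather than suggest the degenerate case is covered.
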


Note that  the dimension of the cycle space of $G$  is actually the number of the
fundamental cycle in $G$.
The following result is obvious.
\begin{lem}\label{lem2.3}
Let $G$ be a graph with $x\in V_G$.
\begin{wst}
\item[{\rm (i)}] If $x$ lies outside any cycle of $G,$ then $\omega(G)=\omega(G-x);$
\item[{\rm (ii)}] If $x$ lies on a cycle, then $\omega(G-x)\leq \omega(G)-1;$
\item[{\rm (iii)}] If the cycles of $G$ are pairwise vertex-disjoint, then $\omega(G)$ precisely equals the number of cycles in $G$.
\end{wst}
\end{lem}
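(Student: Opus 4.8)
The plan is to prove all three parts directly from the identity $\omega(H)=|E_H|-|V_H|+c(H)$ supplied in the text, tracking how each of the three quantities changes under the operation in question. Throughout I write $d:=d_G(x)$ and let $H$ be the connected component of $G$ containing $x$. For part (i), I would first record the elementary observation that $x$ lies on no cycle of $G$ if and only if every edge incident to $x$ is a bridge. This lets me count $c(H-x)$ exactly: every vertex of $H-x$ is joined to $x$ by a path in $H$ whose penultimate vertex lies in $N_G(x)$, so each component of $H-x$ meets $N_G(x)$; moreover two distinct neighbours $a,b$ of $x$ cannot lie in the same component of $H-x$, for an $a$--$b$ path avoiding $x$ together with the edges $xa,xb$ would close a cycle through $x$. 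Hence $c(H-x)=d$ (with the convention $c(H-x)=0$ when $d=0$), so $c(G-x)=c(G)-1+d$. Substituting $|V_{G-x}|=|V_G|-1$ and $|E_{G-x}|=|E_G|-d$ into the formula, the two occurrences of $d$ cancel and $\omega(G-x)=\omega(G)$.

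For part (ii) the same bookkeeping applies, but now $x$ lies on a cycle $C$ and therefore has two neighbours $u,v\in V_C$. The path $C-x$ keeps $u$ and $v$ in one component of $H-x$, so the $d$ neighbours of $x$ now occupy at most $d-1$ components; by the first argument these exhaust the components of $H-x$, whence $c(H-x)\le d-1$ and $c(G-x)\le c(G)+d-2$. The identical substitution then yields $\omega(G-x)\le\omega(G)-1$. For part (iii) I would induct on the number $t$ of cycles of $G$. If $t=0$ then $G$ is acyclic and $\omega(G)=0=t$. If $t\ge 1$, I choose a cycle $C$ and an edge $e\in E_C$ and set $G'=G-e$; since $e$ lies on $C$ its endpoints remain joined by $C-e$, so $c(G')=c(G)$ and hence $\omega(G')=\omega(G)-1$. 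The decisive use of the hypothesis is to identify the cycles of $G'$: deleting an edge never creates a cycle, and the only cycle of $G$ containing $e$ is $C$ itself, because every other cycle is vertex-disjoint from $C$ and so avoids $e$. Thus the cycles of $G'$ are precisely those of $G$ other than $C$; they number $t-1$ and, as a subfamily of a pairwise vertex-disjoint family, remain pairwise vertex-disjoint. The induction hypothesis gives $\omega(G')=t-1$, and therefore $\omega(G)=t$.

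The only genuinely delicate points are the component-count arguments in (i) and (ii)---specifically the claim that every component of $H-x$ contains a neighbour of $x$, and that an on-cycle vertex has two neighbours surviving in a common component---together with the observation in (iii) that pairwise vertex-disjointness forces the deleted edge to lie on a unique cycle. Once these structural facts are in place, the remaining work is the routine arithmetic of the Euler-type identity $\omega(H)=|E_H|-|V_H|+c(H)$.
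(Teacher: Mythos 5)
Your proof is correct in all three parts: the component-counting arguments (every component of $H-x$ meets $N_G(x)$; distinct neighbours land in distinct components when $x$ is on no cycle; two cycle-neighbours share a component otherwise) are sound, and the edge-deletion induction in (iii) correctly uses vertex-disjointness to ensure the deleted edge lies on a unique cycle. The paper itself offers no proof --- it declares the lemma obvious --- so there is nothing to compare against; your argument is exactly the routine bookkeeping with $\omega(H)=|E_H|-|V_H|+c(H)$ that the author evidently had in mind, and it fills the gap completely.
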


For a graph $G$, denote by $\mathcal{P}(G)$\ (resp. $\mathcal{Q}(G))$ the set of all pendant vertices (resp. quasi-pendant vertices) of $G$. In particular, denote by $\mathcal{Q}_2(G)=\{v\in \mathcal{Q}(G): d_G(v)=2\}$
and $\mathcal{Q}_3(G)=\{v\in \mathcal{Q}(G): d_G(v)=3, |N_G(v)\cap \mathcal{P}(G)|=2\}.$ In addition, let
$\mathcal{Q}_2'(G)=\{v\in V_G: d_G(v)=2, |N_G(v)\cap \mathcal{Q}_2(G)|=1\}.$

Li and Zhou \cite{LZ25} proved that if $G$ is a graph with at least 7 vertices, then there exists a maximum generalized 4-independent set in $G$ that can include all vertices in $\mathcal{P}(G)\cup \mathcal{Q}_2(G)\cup\mathcal{Q}_3(G)\cup\mathcal{Q}'_2(G).$ Their proof demonstrates that, under no order constraints, every connected graph admits a maximum generalized $4$-independent  set containing all pendant vertices. For the sake of completeness, we will also include the proof process below.

\begin{lem}\label{lem2.4}
Let $G$ be a simple connected graph. Then there exists a maximum generalized $4$-independent set in $G$ that can contain all vertices in $\mathcal{P}(G)$.
\end{lem}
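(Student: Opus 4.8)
The plan is to first recast the defining condition in a more tractable form. Since a connected graph on at least four vertices always admits a spanning tree and hence contains a subtree on exactly four vertices (a copy of $P_4$ or $S_4$), a subset $S\subseteq V_G$ is a generalized $4$-independent set if and only if every connected component of the induced subgraph $G[S]$ has at most three vertices. I would record this equivalence at the outset, because it turns the whole statement into a purely combinatorial fact about component sizes and makes the subsequent exchange arguments transparent.

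With this reformulation in hand, I would argue by an extremal choice. Among all maximum generalized $4$-independent sets of $G$, choose one, say $S$, containing as many pendant vertices as possible; the goal is to show $\mathcal{P}(G)\subseteq S$. Suppose not, and let $p\in\mathcal{P}(G)\setminus S$ with unique neighbor $u$. I first claim $u\in S$: if $u\notin S$, then $p$ would be an isolated vertex of $G[S\cup\{p\}]$, so $S\cup\{p\}$ would still be a generalized $4$-independent set, contradicting the maximality of $|S|$. Hence $u\in S$, and moreover the component $C$ of $u$ in $G[S]$ must have exactly three vertices: otherwise $|C|\le 2$, and adjoining $p$ (whose only neighbor is $u$) would produce a component of size at most three, again yielding a strictly larger generalized $4$-independent set.

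The decisive step is then a local exchange. Set $S'=(S\setminus\{u\})\cup\{p\}$, so $|S'|=|S|$. I would verify that $S'$ is again a generalized $4$-independent set: deleting $u$ breaks the three-vertex component $C$ into components of size at most two, while $p$ becomes isolated in $G[S']$ (its sole neighbor $u$ has been removed) and all components not meeting $C$ are unchanged; thus every component of $G[S']$ still has at most three vertices. Therefore $S'$ is also a maximum generalized $4$-independent set. Now $S'$ gains the pendant vertex $p$ and loses only $u$, so provided $u$ is not itself a pendant vertex, $S'$ contains strictly more pendant vertices than $S$, contradicting the extremal choice of $S$.

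The only gap to close is the degenerate possibility that $u$ is a pendant vertex. Since $u$ is adjacent to the pendant $p$, if $d_G(u)=1$ then $N_G(u)=\{p\}$ and, by connectedness, $G=K_2$; in that case $V_G$ itself is a generalized $4$-independent set containing both pendant vertices and the statement is immediate. Otherwise $d_G(u)\ge 2$, so $u\notin\mathcal{P}(G)$ and the contradiction above applies. I expect this degenerate check and the careful verification that the exchanged set stays valid to be the only delicate points; the heart of the argument is the observation that a missing pendant forces its neighbor to sit at the end of a full three-vertex component, which is exactly the configuration a single swap can repair without losing cardinality.
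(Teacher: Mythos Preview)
Your argument is correct and follows the same extremal-choice-plus-swap strategy as the paper: pick a maximum generalized $4$-independent set $S$ with the most pendant vertices, and if some pendant $p$ is missing, swap in $p$ for its neighbour $u$ to contradict that choice. Your write-up is more detailed---you spell out the component-size reformulation, verify explicitly that the swapped set remains valid, and separate off the $G\cong K_2$ degeneracy---whereas the paper compresses all of this into two sentences; but the underlying idea is identical. (Your intermediate claim that the component of $u$ in $G[S]$ has exactly three vertices is true but not actually needed: the swap $S'=(S\setminus\{u\})\cup\{p\}$ produces a valid set regardless of that component's size, since deleting a vertex can only shrink components and $p$ becomes isolated.)
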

\begin{proof}
 Let $S$ be a maximum generalized $4$-independent set that
 maximizes $|\mathcal{P}(G) \cap S|$. Suppose for contradiction
 there exists $u \in \mathcal{P}(G) \setminus S$, and let $v \in N_G(u)$.
 Necessarily $v \in S \setminus \mathcal{P}(G)$; otherwise $S\cup \{u\}$
 would be a generalized $4$-independent set, contradicting the maximality of
  $S$. The set $S':= (S \setminus \{v\}) \cup \{u\}$ is then a maximum
  generalized $4$-independent set with $|S' \cap \mathcal{P}(G)| > |S \cap \mathcal{P}(G)|$,
  violating the choice of $S$. Thus $\mathcal{P}(G) \subseteq S$.

\end{proof}

\section{\normalsize Proof of  Theorem \ref{thm1.4}}\setcounter{equation}{0}\label{sec3}
In this section, we give  a proof for Theorem \ref{thm1.4}, which
establishes a sharp lower bound on the generalized 4-independence number of a general graph. The corresponding extremal graphs are also characterized. More specifically, we give the proof  according to the following steps. We first show that the inequality in \eqref{eq1.1} holds. Then we present a few technical lemmas. Finally, we characterize all the graphs which attain the equality in \eqref{eq1.1}.
Without loss of generality,  assume that $G$ is  connected.
\vspace{2mm}

\begin{lem}\label{lem3.1}
The inequality \eqref{eq1.1} holds.
\end{lem}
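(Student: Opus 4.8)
The plan is to prove the inequality $\alpha_4(G)\geq \frac{3}{4}[n-\omega(G)]$ by induction on $\omega(G)$, the dimension of the cycle space. The base case $\omega(G)=0$ means $G$ is acyclic (a tree, since we assume $G$ connected), and then Theorem \ref{thm1.2} (equivalently Theorem \ref{thm1.1}) gives $\alpha_4(G)\geq \frac{3}{4}n = \frac{3}{4}[n-\omega(G)]$ directly. This reduces everything to producing a good inductive step that strips away one unit of cycle-space dimension while controlling how much $\alpha_4$ can drop.

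\medskip
\noindent
For the inductive step, suppose $\omega(G)\geq 1$, so $G$ contains at least one cycle; pick a vertex $x$ lying on some cycle of $G$. The idea is to delete $x$ and apply the inductive hypothesis to $G-x$. By Lemma \ref{lem2.3}(ii), deleting a vertex on a cycle satisfies $\omega(G-x)\leq \omega(G)-1$, and by Lemma \ref{lem2.1}(i) we have $\alpha_4(G-x)\leq \alpha_4(G)$, i.e. $\alpha_4(G)\geq \alpha_4(G-x)$. Applying the inductive hypothesis to each component of $G-x$ (using that $\alpha_4$ and $n$ and $\omega$ are all additive over components) yields
\begin{equation*}
\alpha_4(G)\geq \alpha_4(G-x)\geq \frac{3}{4}[(n-1)-\omega(G-x)]\geq \frac{3}{4}[(n-1)-(\omega(G)-1)]=\frac{3}{4}[n-\omega(G)].
\end{equation*}
This chain closes the induction cleanly, because the loss of one vertex in the count $n-1$ is exactly compensated by the gain of at least one in the reduction $\omega(G-x)\leq \omega(G)-1$.

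\medskip
\noindent
The one point that needs care — and what I expect to be the main (if modest) obstacle — is making the induction on $\omega$ compatible with the connectedness reduction and with the additivity of all three quantities. After deleting $x$, the graph $G-x$ may become disconnected, so the inductive hypothesis must be understood to apply componentwise; this is harmless because $\alpha_4$, the vertex count, and $\omega$ are each additive over connected components, but it should be stated explicitly so the inequality $\alpha_4(G-x)\geq \frac{3}{4}[(n-1)-\omega(G-x)]$ is justified as a sum of per-component bounds. One must also confirm that the base of the induction genuinely covers the acyclic case via Theorem \ref{thm1.2} rather than only the connected tree case of Theorem \ref{thm1.1}. No delicate estimate is required beyond these bookkeeping steps; the two monotonicity lemmas (Lemma \ref{lem2.1}(i) and Lemma \ref{lem2.3}(ii)) supply all the quantitative content, and the inequality follows by a direct telescoping of the two effects.
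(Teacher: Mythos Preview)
Your proposal is correct and follows essentially the same approach as the paper: induction on $\omega(G)$, with the base case handled by Theorem~\ref{thm1.1}/\ref{thm1.2} and the inductive step obtained by deleting a vertex $x$ on a cycle and combining Lemma~\ref{lem2.1}(i) with Lemma~\ref{lem2.3}(ii). Your explicit remark about additivity over components when $G-x$ disconnects is a welcome clarification that the paper leaves implicit.
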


\begin{proof}
We show \eqref{eq1.1} holds by induction on $\omega(G).$  If $\omega(G)=0$, then $G$ is a tree and the result follows immediately by Theorem \ref{thm1.1}.  Now assume that $\omega(G) \geq 1,$  i.e., $G$ has at least one cycle. Let $x$ be a vertex lying on some cycle.  By Lemmas \ref{lem2.1} and \ref{lem2.3}, we have
\begin{eqnarray}\label{eq3.1}
\alpha_4(G)\geq \alpha_4(G-x),\ \ \omega(G)\geq \omega(G-x)+1.
\end{eqnarray}
Applying the induction hypothesis yielding
\begin{eqnarray}\label{eq3.2}
\alpha_4(G-x)\geq\frac{3}{4}[n-1-\omega(G-x)].
\end{eqnarray}
Therefore,  \eqref{eq3.1}-\eqref{eq3.2} lead to
\begin{eqnarray*}
\alpha_4(G)\geq \frac{3}{4}[n-\omega(G)],
\end{eqnarray*}
as desired.
\end{proof}

For convenience, a graph is called \textit{good} if it achieves equality in \eqref{eq1.1}. In the following, we aim to provide some fundamental characterizations of good graphs. The following result is a direct consequence of Lemma \ref{lem3.1}.

\begin{lem}\label{lem3.2}
A disconnected graph is good if and only if each component of it is good.
\end{lem}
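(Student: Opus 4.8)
The plan is to prove both directions of the biconditional separately, using the additivity of $\alpha_4$ and $\omega$ over connected components. For the forward direction, suppose $G$ is a disconnected good graph with components $G_1, \dots, G_k$, having orders $n_1, \dots, n_k$. By Lemma \ref{lem3.1}, each component satisfies $\alpha_4(G_i) \geq \frac{3}{4}[n_i - \omega(G_i)]$. Summing over all components and using the facts that $n = \sum_i n_i$, $\omega(G) = \sum_i \omega(G_i)$ (since the cycle space of a disjoint union is the direct sum of the cycle spaces of the components), and $\alpha_4(G) = \sum_i \alpha_4(G_i)$ (a subset $S$ avoids containing a $4$-tree in $G$ if and only if its restriction to each component does, so maximum generalized $4$-independent sets decompose), I obtain $\alpha_4(G) = \sum_i \alpha_4(G_i) \geq \sum_i \frac{3}{4}[n_i - \omega(G_i)] = \frac{3}{4}[n - \omega(G)]$.

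Now the key observation is that if $G$ is good, then equality holds in the chained inequality. Since each individual term satisfies $\alpha_4(G_i) \geq \frac{3}{4}[n_i - \omega(G_i)]$, the only way the sum of the left-hand sides can equal the sum of the right-hand sides is if every individual inequality is an equality; a single strict inequality $\alpha_4(G_j) > \frac{3}{4}[n_j - \omega(G_j)]$ would force $\alpha_4(G) > \frac{3}{4}[n - \omega(G)]$, contradicting goodness of $G$. Hence each $G_i$ is good. For the reverse direction, if every component $G_i$ is good, then $\alpha_4(G_i) = \frac{3}{4}[n_i - \omega(G_i)]$ for all $i$, and summing immediately gives $\alpha_4(G) = \frac{3}{4}[n - \omega(G)]$, so $G$ is good.

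I expect there will be no serious obstacle here, as this lemma is genuinely routine once the three additivity properties are in hand. The only point requiring a sentence of justification is the additivity $\alpha_4(G) = \sum_i \alpha_4(G_i)$: a vertex subset $S$ induces a subgraph containing no $4$-tree precisely when, within each component, the induced subgraph contains no $4$-tree (since any connected subgraph, in particular any tree on four vertices, lies entirely within one component), so a maximum such set is obtained by independently choosing a maximum generalized $4$-independent set in each component. The additivity of $\omega$ and of $n$ are immediate from the definition $\omega(G) = |E_G| - |V_G| + c(G)$ and the disjointness of the components. The whole argument is essentially the remark already stated in the paper just before Theorem \ref{thm1.2}, now combined with the componentwise lower bound of Lemma \ref{lem3.1}, so the proof should be short.
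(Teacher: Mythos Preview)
Your proposal is correct and matches the paper's approach: the paper simply states that Lemma~\ref{lem3.2} is ``a direct consequence of Lemma~\ref{lem3.1}'' without writing out a proof, and your argument is exactly the routine spelling-out of this consequence via the additivity of $\alpha_4$, $n$, and $\omega$ over components. There is nothing to add.
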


\begin{lem}\label{lem3.3}
Let $G$ be a graph with $x\in V_G$  lying on some cycle of $G$. If $G$ is good, then
\begin{wst}
\item[{\rm (i)}] $\alpha_4(G)=\alpha_4(G-x);$
\item[{\rm (ii)}] $\omega(G)=\omega(G-x)+1;$
\item[{\rm (iii)}] $G-x$ is good;
\item[{\rm (iv)}] $x\notin \mathcal{Q}(G)$ and $x$ is not adjacent to any vertex in $\mathcal{Q}_2(G)\cup\mathcal{Q}_3(G)\cup\mathcal{Q}'_2(G).$
\end{wst}
\end{lem}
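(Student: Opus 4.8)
The plan is to leverage the fundamental inequality from Lemma \ref{lem3.1} together with the sharpness hypothesis, treating each claim as a rigidity statement forced by equality. Recall that the proof of Lemma \ref{lem3.1} proceeds by deleting a vertex $x$ on a cycle and chaining the three inequalities $\alpha_4(G)\geq\alpha_4(G-x)$, $\omega(G)\geq\omega(G-x)+1$, and the induction hypothesis $\alpha_4(G-x)\geq\frac{3}{4}[n-1-\omega(G-x)]$. Since $G$ is good, every inequality in this chain must be tight. First I would write out the chain explicitly:
\[
\tfrac{3}{4}[n-\omega(G)]=\alpha_4(G)\geq\alpha_4(G-x)\geq\tfrac{3}{4}[n-1-\omega(G-x)]\geq\tfrac{3}{4}[n-1-(\omega(G)-1)]=\tfrac{3}{4}[n-\omega(G)].
\]
Because the leftmost and rightmost quantities coincide, all intermediate inequalities are equalities. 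This immediately forces $\alpha_4(G)=\alpha_4(G-x)$, giving (i); it forces $\omega(G-x)=\omega(G)-1$, giving (ii) (here I use Lemma \ref{lem2.3}(ii), which only guarantees $\omega(G-x)\leq\omega(G)-1$, so tightness in the chain pins it down exactly); and it forces $\alpha_4(G-x)=\frac{3}{4}[n-1-\omega(G-x)]=\frac{3}{4}[(n-1)-\omega(G-x)]$, which is precisely the statement that $G-x$ is good on its $n-1$ vertices, giving (iii).

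For part (iv), the argument is by contradiction and relies on Lemma \ref{lem2.4} (or its stated extension to $\mathcal{P}\cup\mathcal{Q}_2\cup\mathcal{Q}_3\cup\mathcal{Q}_2'$). The key idea is that a good graph is, in the sense of $\alpha_4$, as ``efficient'' as possible, so there is no room for the wasteful local configurations that arise when a cycle vertex touches a quasi-pendant structure. Concretely, suppose $x\in\mathcal{Q}(G)$ or $x$ is adjacent to some $w\in\mathcal{Q}_2(G)\cup\mathcal{Q}_3(G)\cup\mathcal{Q}_2'(G)$. In each such case I would exhibit a maximum generalized $4$-independent set $S$ of $G$ that, by the extension of Lemma \ref{lem2.4}, contains all the relevant pendant and quasi-pendant vertices, and then argue that deleting $x$ actually forces a strict drop $\alpha_4(G-x)<\alpha_4(G)$, contradicting part (i). The mechanism is that the pendant/quasi-pendant vertices guaranteed to lie in every optimal set, together with the constraint that $x$ lies on a cycle, make the neighborhood of $x$ carry so many forced members of $S$ that removing $x$ cannot be absorbed without loss.

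The main obstacle will be the case analysis in part (iv): one must handle separately the possibilities that $x$ itself is a quasi-pendant vertex and that $x$ is a neighbor of a vertex in each of $\mathcal{Q}_2(G)$, $\mathcal{Q}_3(G)$, and $\mathcal{Q}_2'(G)$, and in each case construct an explicit maximum generalized $4$-independent set whose size is preserved only if $x$ is \emph{not} deleted. The delicate point is that the induced subgraph on $\{x\}$ together with the nearby pendant and quasi-pendant vertices must be shown to be $P_4$-free (equivalently, contain no $4$-tree) before and after various local modifications, so that the counting of $|S|$ is rigorously controlled; this is where the precise degree conditions defining $\mathcal{Q}_2,\mathcal{Q}_3,\mathcal{Q}_2'$ are used. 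I expect parts (i)--(iii) to be essentially immediate from the equality-forcing argument above, with all the real work concentrated in (iv).
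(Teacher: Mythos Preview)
Your treatment of (i)--(iii) is correct and matches the paper exactly: the good hypothesis collapses the chain of inequalities from Lemma~\ref{lem3.1} to equalities, yielding all three statements at once.

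Your plan for (iv), however, targets the wrong contradiction and will not go through. You propose to use Lemma~\ref{lem2.4} (and its extension) to force $\alpha_4(G-x)<\alpha_4(G)$, contradicting (i). But Lemma~\ref{lem2.4} only furnishes \emph{one} maximum generalized $4$-independent set containing the pendant/quasi-pendant vertices; it says nothing about $x$ itself, and in particular it does not show that \emph{every} maximum set contains $x$, which is what $\alpha_4(G-x)<\alpha_4(G)$ would require. Indeed the implication ``$x$ lies on a cycle and $x\in\mathcal{Q}(G)$ $\Rightarrow$ $\alpha_4(G-x)<\alpha_4(G)$'' is false for general graphs: take $G$ to be $C_5$ with a pendant edge attached at $x$; then $\alpha_4(G)=4=\alpha_4(G-x)$. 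Since your mechanism does not invoke goodness beyond (i), it would have to hold for arbitrary $G$, and it does not.

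The paper instead contradicts (iii), not (i), and the argument is almost immediate. If $x\in\mathcal{Q}(G)$, deleting $x$ isolates its pendant neighbour, so $K_1$ is a component of $G-x$; if $x$ is adjacent to some $w\in\mathcal{Q}_2(G)$, then $G-x$ acquires a $P_2$ component; if $x$ is adjacent to some $w\in\mathcal{Q}_3(G)\cup\mathcal{Q}_2'(G)$, then $G-x$ acquires a $P_3$ component. None of $K_1,P_2,P_3$ is good (their $\alpha_4$ values are $1,2,3$, all strictly exceeding $\tfrac{3}{4}$ of their orders), so by Lemma~\ref{lem3.2} the graph $G-x$ is not good, contradicting (iii). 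No case analysis on maximum independent sets is needed.
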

\begin{proof}
The good condition for $G$ together with the proof of Lemma \ref{lem3.1} forces all equalities in \eqref{eq3.1}-\eqref{eq3.2}. Hence, (i)-(iii) are all derived.

If $x\in \mathcal{Q}(G),$ then $G-x$ contains an isolated vertex as its connected component. The fact that
an isolated vertex is not  good together with Lemma \ref{lem3.2} implies $G-x$ is not good, which contradicts to (iii). Therefore, $x\notin \mathcal{Q}(G).$

If $x$ is adjacent to some vertex in $\mathcal{Q}_2(G),$ then $P_2$ is a connected
component of $G-x$. By (iii) and Lemma \ref{lem3.2}, one has $P_2$ is good,  which is clearly impossible. Therefore, $x$ is not adjacent to any vertex in $\mathcal{Q}_2(G).$

If $x$ is adjacent to some vertex in $\mathcal{Q}_3(G)\cup\mathcal{Q}'_2(G),$ then $P_3$ is a connected
component of $G-x$. It follows from  (iii) and Lemma \ref{lem3.2} that $P_3$ is good,  which clearly leads to a contradiction. Consequently, $x$ is not adjacent to any vertex in $\mathcal{Q}_3(G)\cup\mathcal{Q}'_2(G).$
This completes the proof of (iv).

\end{proof}

An induced cycle $H$  of a graph $G$ is called a \textit{pendant cycle} if $H$ contains a unique vertex of degree 3 and each of its rest vertices is of degree 2 in $G$. For example, the graph $G$ as depicted in Figure \ref{fig1}  has exactly two pendant cycles
(the $9$-vertex cycle is not a pendant cycle).

\begin{lem}\label{lem3.4}
Let $G$ be a graph with $C_q$ being a pendant cycle of $G$. Denote by $H=G-C_q$. If $G$ is good, then
\begin{wst}
\item[{\rm (i)}] $q\equiv 1\pmod{4};$
\item[{\rm (ii)}] $\alpha_4(G)=\alpha_4(H)+\frac{3}{4}(q-1);$
\item[{\rm (iii)}] $H$ is good.
\end{wst}
\end{lem}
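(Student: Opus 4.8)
The plan is to sandwich $\alpha_4(G)$ between an upper bound coming from the goodness of $G$ and a lower bound coming from an explicit construction, and then to read off all three conclusions from the observation that the two bounds can only meet when $q\equiv 1\pmod 4$. First I would record the structural facts. Let $w$ be the unique degree-$3$ vertex of the pendant cycle $C_q$, let $u$ be its neighbour in $H$, and write the remaining cycle vertices as $v_2,\dots,v_q$, so that they induce a path $P_{q-1}$ all of whose vertices have degree $2$ in $G$. Then $wu$ is the only edge joining $C_q$ to the rest of $G$; in particular $wu$ is a bridge, $H$ is connected, and there is no edge of $G$ between $\{v_2,\dots,v_q\}$ and $V_H$. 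Counting, deleting $C_q$ removes $q$ vertices and $q+1$ edges (the $q$ cycle edges together with the bridge $wu$), so $|V_H|=n-q$, and since both $G$ and $H$ are connected, $\omega(H)=|E_H|-|V_H|+1=\omega(G)-1$.

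For the upper bound I would apply Lemma \ref{lem3.1} to $H$, obtaining $\alpha_4(H)\geq \frac34[\,|V_H|-\omega(H)\,]=\frac34[\,n-q+1-\omega(G)\,]$. Combining this with the hypothesis that $G$ is good, namely $\alpha_4(G)=\frac34[\,n-\omega(G)\,]$, yields at once $\alpha_4(G)\leq \alpha_4(H)+\frac34(q-1)$.

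For the matching lower bound I would build a feasible set directly, using that a set is generalized $4$-independent precisely when every connected component of its induced subgraph has at most $3$ vertices. Take a maximum generalized $4$-independent set $S_H$ of $H$ and a maximum generalized $4$-independent set $S_P$ of the path $P_{q-1}=v_2\cdots v_q$. Since $w\notin S_H\cup S_P$ and $w$ is the only vertex linking the cycle to $H$, the induced subgraph $G[S_H\cup S_P]$ is the disjoint union of $H[S_H]$ and $P_{q-1}[S_P]$, so every component still has at most $3$ vertices and $S_H\cup S_P$ is generalized $4$-independent. By Lemma \ref{lem2.2} this gives $\alpha_4(G)\geq \alpha_4(H)+\alpha_4(P_{q-1})=\alpha_4(H)+\left\lceil\frac34(q-1)\right\rceil$.

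Finally I would combine the two estimates into the chain $\alpha_4(H)+\left\lceil\frac34(q-1)\right\rceil\leq \alpha_4(G)\leq \alpha_4(H)+\frac34(q-1)$, which forces $\frac34(q-1)$ to be an integer, i.e.\ $q\equiv 1\pmod 4$, proving (i). All inequalities then collapse to equalities, giving $\alpha_4(G)=\alpha_4(H)+\frac34(q-1)$, which is (ii); and substituting $\alpha_4(G)=\frac34[\,n-\omega(G)\,]$ shows $\alpha_4(H)=\frac34[\,|V_H|-\omega(H)\,]$, so $H$ is good, which is (iii). The only delicate point is this integrality step: it is exactly the gap between the $\lceil\frac34(q-1)\rceil$ produced by the construction and the exact value $\frac34(q-1)$ dictated by goodness that pins down the residue of $q$, so one must be careful to verify that excluding $w$ genuinely decouples the path $P_{q-1}$ from $H$ and that the $\omega$-count $\omega(H)=\omega(G)-1$ is correct.
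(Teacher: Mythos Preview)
Your proof is correct and takes a genuinely different route from the paper's. The paper deletes the unique degree-$3$ vertex $x$ on $C_q$ and invokes Lemma~\ref{lem3.3} to conclude that $G-x=P_{q-1}\cup H$ is good; Lemma~\ref{lem3.2} then forces each of $P_{q-1}$ and $H$ to be good, and goodness of $P_{q-1}$ together with Lemma~\ref{lem2.2} yields $q\equiv 1\pmod 4$, after which (ii) is read off from $\alpha_4(G)=\alpha_4(G-x)$. Your argument bypasses Lemma~\ref{lem3.3} entirely: you apply Lemma~\ref{lem3.1} directly to $H$ to get the upper bound $\alpha_4(G)-\alpha_4(H)\le\frac34(q-1)$, and you build an explicit feasible set on $V_H\cup\{v_2,\dots,v_q\}$ (avoiding $w$) to obtain the lower bound $\lceil\frac34(q-1)\rceil$; the integrality squeeze then does the work that the paper delegates to Lemma~\ref{lem3.3}. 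Your approach is more self-contained and makes the role of the residue of $q$ very explicit; the paper's approach has the economy of reusing the vertex-deletion machinery of Lemma~\ref{lem3.3}, which is needed anyway for Lemma~\ref{lem3.6}, so nothing extra has to be proved.
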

\begin{proof}
Let $x$ be the unique vertex of degree $3$ on $C_q.$ Then $G-x=P_{q-1}\cup H$.
By Lemmas \ref{lem3.2} and \ref{lem3.3}(iii), we obtain both $P_{q-1}$ and $H$ are good. This means $\alpha_4(P_{q-1})=\frac{3}{4}(q-1)$ and $q\equiv 1\pmod{4}$
by Theorem \ref{thm1.1} and Lemma \ref{lem2.2}.
 Applying
Lemma \ref{lem3.3}(i), one has
$$
\alpha_4(G)=\alpha_4(G-x)=\alpha_4(H)+\alpha_4(P_{q-1})
=\alpha_4(H)+\frac{3}{4}(q-1).
$$
This completes the proof.
\end{proof}

Specializing a vertex in a tree yields a so-called \textit{rooted tree},
where the specialized vertex is called the \textit{root} of this tree. In a rooted
tree $T$, the length of the unique path $rTv$ from the root $r$ to the vertex
$v$ is called the \textit{level} of $v$, denoted by $l(v)$. Each vertex on
the path $rTv$, not including the vertex $v$ itself, is called an \textit{ancestor} of
$v$, and each vertex with $v$ as its ancestor is a \textit{descendant} of $v$.
The immediate ancestor of $v$ is its \textit{parent}, and the vertices whose parent is
$v$ are its \textit{children}. Denote by $O_{\cup_{k\geq2}(b_kS_k)\cup bP_4}$ the graph obtained
by identifying one  leaf from each of \( b \) copies of \( P_4 \), \( b_2 \) copies of \( S_2 \), \( b_i\) copies of \( S_i\ (i\geq 3) \), see Figure \ref{fig04} for an example of $O_{\cup_{k=2}^6(b_kS_k)\cup bP_4}.$

\begin{figure}[!ht]
\centering
  \begin{tikzpicture}[scale = 1.1]
  \tikzstyle{vertex}=[circle,fill=black,minimum size=0.38em,inner sep=0pt]
  \node[vertex] (1) at (0,0){};
  \node[vertex] (2) at (-2.5,1){};
  \node[vertex] (3) at (-2.5,1.5){};
  \node[vertex] (4) at (-2.5,2){};
  \node[vertex] (5) at (-1.5,1){};
  \node[vertex] (6) at (-1.5,1.5){};
  \node[vertex] (7) at (-1.5,2){};
  \node[vertex] (8) at (-0.5,1){};
  \node[vertex] (9) at (0.5,1){};
  \node[vertex] (10) at (1.5,1){};
  \node[vertex] (11) at (1.5,1.5){};
  \node[vertex] (12) at (2.5,1){};
  \node[vertex] (13) at (2.5,1.5){};
  \node[vertex] (14) at (-2.5,-1){};
  \node[vertex] (15) at (-2.8,-1.5){};
  \node[vertex] (16) at (-2.2,-1.5){};
  \node[vertex] (17) at (-1.5,-1){};
  \node[vertex] (18) at (-1.8,-1.5){};
  \node[vertex] (19) at (-1.2,-1.5){};
  \node[vertex] (20) at (-0.5,-1){};
  \node[vertex] (21) at (-0.8,-1.5){};
  \node[vertex] (22) at (-0.5,-1.5){};
  \node[vertex] (23) at (-0.2,-1.5){};
  \node[vertex] (24) at (0.5,-1){};
  \node[vertex] (25) at (0.2,-1.5){};
  \node[vertex] (26) at (0.5,-1.5){};
  \node[vertex] (27) at (0.8,-1.5){};
  \node[vertex] (28) at (1.5,-1){};
  \node[vertex] (29) at (1.2,-1.5){};
  \node[vertex] (30) at (1.4,-1.5){};
  \node[vertex] (31) at (1.6,-1.5){};
  \node[vertex] (32) at (1.8,-1.5){};
  \node[vertex] (33) at (2.5,-1){};
  \node[vertex] (34) at (2.2,-1.5){};
  \node[vertex] (35) at (2.4,-1.5){};
  \node[vertex] (36) at (2.6,-1.5){};
  \node[vertex] (37) at (2.8,-1.5){};
  \draw[thick] (1) -- (2)--(3)--(4);
  \draw[thick] (1) -- (5)--(6)--(7);
  \draw[thick] (1)--(8);
  \draw[thick] (1)--(9);
  \draw[thick] (1) -- (10)--(11);
  \draw[thick] (1) -- (12)--(13);
  \draw[thick] (1)--(14)--(15);
  \draw[thick] (14)--(16);
  \draw[thick] (1)--(17)--(18);
  \draw[thick] (17)--(19);
  \draw[thick] (1)--(20)--(21);
  \draw[thick] (20)--(22);
  \draw[thick] (20)--(23);
  \draw[thick] (1)--(24)--(25);
  \draw[thick] (24)--(26);
  \draw[thick] (24)--(27);
  \draw[thick] (1)--(28)--(29);
  \draw[thick] (28)--(30);
  \draw[thick] (28)--(31);
  \draw[thick] (28)--(32);
  \draw[thick] (1)--(33)--(34);
  \draw[thick] (33)--(35);
  \draw[thick] (33)--(36);
  \draw[thick] (33)--(37);
  \draw (0,1)node{$\cdots$};
  \draw (-2,1)node{$\cdots$};
  \draw (2,1)node{$\cdots$};
  \draw (0,-1)node{$\cdots$};
  \draw (-2,-1)node{$\cdots$};
  \draw (2,-1)node{$\cdots$};
  \draw (0,1.4)node{$b_2$};
  \draw (-2,1.4)node{$b$};
  \draw (2,1.4)node{$b_3$};
  \draw (0,-1.3)node{$b_5$};
  \draw (-2,-1.3)node{$b_4$};
  \draw (2,-1.3)node{$b_6$};
  \draw (0.17,1.15)node{\scalebox{3.0}[1.5]{\rotatebox{90}{\}} }};
  \draw (-1.83,1.15)node{\scalebox{3.0}[1.5]{\rotatebox{90}{\}} }};
  \draw (2.17,1.15)node{\scalebox{3.0}[1.5]{\rotatebox{90}{\}} }};
  \draw (0.14,-1.12)node{\scalebox{2.8}[1.5]{\rotatebox{90}{\{} }};
  \draw (-1.86,-1.12)node{\scalebox{2.8}[1.5]{\rotatebox{90}{\{} }};
  \draw (2.14,-1.12)node{\scalebox{2.8}[1.5]{\rotatebox{90}{\{} }};
  \end{tikzpicture}
  \caption{Graph $O_{\cup_{k=2}^6(b_kS_k)\cup bP_4}.$}\label{fig04}
\end{figure}
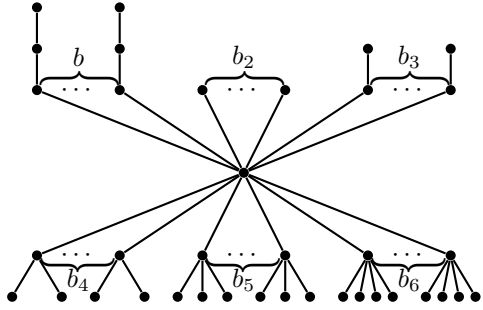

\begin{lem}\label{lem3.6}
If $G$ is good, then
\begin{wst}
\item[{\rm (i)}] the cycles (if any) of $G$ are pairwise vertex-disjoint;
\item[{\rm (ii)}] the order of each cycle (if any) of $G$ is
$4k+1,$ where $k$ is a positive integer;
\item[{\rm (iii)}] $\alpha_4(G)=\alpha_4(\Gamma_G)+\sum_{C\in \mathscr{C}_G}\frac{3}{4}(|V_C|-1).$
\end{wst}
\end{lem}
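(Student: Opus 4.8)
The plan is to treat the three assertions by a single induction whose backbone is the vertex-deletion machinery of Lemmas \ref{lem3.3} and \ref{lem3.4}. Throughout, for a vertex $x$ of a connected graph $G$, write $t(x):=\omega(G)-\omega(G-x)$. Since $\omega(G-x)=\omega(G)-d_G(x)+c(G-x)$ and, by the block--cut-vertex structure, the number of components $c(G-x)$ equals the number of blocks of $G$ incident with $x$, one obtains the clean identity $t(x)=\sum_{B\ni x}\bigl(d_B(x)-1\bigr)$, the sum ranging over the blocks $B$ containing $x$. The whole argument is driven by comparing $t(x)$ against the rigid value forced by goodness, namely $t(x)=1$ for every vertex $x$ on a cycle, as guaranteed by Lemma \ref{lem3.3}(ii).

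For part (i) I argue by contradiction. Suppose two distinct cycles of $G$ meet. Then either they lie in a common block $B$, which is consequently $2$-connected and not a single cycle, so $\omega(B)\ge 2$; or they lie in two different blocks that share a cut vertex. In the first case $|E_B|\ge|V_B|+1$ forces, via $\sum_{v\in B}d_B(v)=2|E_B|\ge 2|V_B|+2$, a vertex $x\in B$ with $d_B(x)\ge 3$, whence $t(x)\ge d_B(x)-1\ge 2$; in the second case the shared cut vertex $x$ lies in two blocks each of which contributes $d_B(x)-1=1$, so again $t(x)\ge 2$. In both situations $x$ lies on a cycle, so Lemma \ref{lem3.3}(ii) demands $t(x)=1$, a contradiction. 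Hence the cycles are pairwise vertex-disjoint, and by Lemma \ref{lem2.3}(iii) $\omega(G)$ equals the number of cycles of $G$.

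For (ii) and (iii) I induct on $\omega(G)$, the base case $\omega(G)=0$ being immediate since then $G=\Gamma_G$ and $\mathscr C_G=\emptyset$. Before the inductive step I record an arithmetic reformulation of (iii): as the cycles are vertex-disjoint, $n=|V_{\Gamma_G}|+\sum_{C\in\mathscr C_G}|V_C|$ and $\omega(G)=|\mathscr C_G|$, so goodness gives
\[
\alpha_4(G)=\frac34\bigl(n-\omega(G)\bigr)=\frac34\,|V_{\Gamma_G}|+\sum_{C\in\mathscr C_G}\frac34\bigl(|V_C|-1\bigr).
\]
Thus (iii) is \emph{equivalent} to the single statement $\alpha_4(\Gamma_G)=\frac34|V_{\Gamma_G}|$, i.e. that the forest $\Gamma_G$ is itself good, which by Theorem \ref{thm1.2} simultaneously pins its components down as the trees $R_i$. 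The inductive engine is Lemma \ref{lem3.4}: if $G$ possesses a pendant cycle $C_q$, then $H:=G-C_q$ is good with $\omega(H)=\omega(G)-1$ and $q\equiv 1\pmod 4$, while deleting all vertices of $C_q$ leaves the remaining cycle-vertices untouched, so $\Gamma_H=\Gamma_G$ and $\mathscr C_H=\mathscr C_G\setminus\{C_q\}$. Applying the induction hypothesis to $H$ gives length $\equiv 1\pmod 4$ for every other cycle (and $q\ge 5$, since a cycle has at least $3$ vertices) together with $\alpha_4(H)=\alpha_4(\Gamma_G)+\sum_{C\ne C_q}\frac34(|V_C|-1)$; combining this with $\alpha_4(G)=\alpha_4(H)+\frac34(q-1)$ from Lemma \ref{lem3.4}(ii) yields both (ii) and (iii) for $G$.

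The one gap in this scheme, and the step I expect to be the genuine obstacle, is that a good graph need \emph{not} have a pendant cycle: a cycle may carry several tree-branches and hence several degree-$3$ vertices (for instance a $C_5$ with a path $P_4$ attached to each of two of its vertices is good yet has no pendant cycle). My plan to close this is to strip the acyclic fringe first. Choosing a cycle $C$ that is deepest in the contracted tree $T_G$, everything hanging below $C$ is a forest; I will show that goodness forces each such pendant branch, read inward from its leaves, to begin with a copy of $R_1$ (a $P_4$ or $S_4$), and that deleting one such pendant $R_1$ produces a graph $G'$ with $|V_{G'}|=n-4$, $\omega(G')=\omega(G)$, $\mathscr C_{G'}=\mathscr C_G$ and $\alpha_4(G')=\alpha_4(G)-3$, so that $G'$ is again good with $\Gamma_{G'}$ equal to $\Gamma_G$ minus that branch. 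Iterating removes branches until $C$ has a unique attachment vertex, i.e. becomes a pendant cycle, at which point the Lemma \ref{lem3.4} step applies. Verifying that these fringe deletions preserve goodness and lower $\alpha_4$ by exactly $3$ is precisely where the detailed $R_i$-structure (Theorem \ref{thm1.1}, the auxiliary sets $\mathcal P,\mathcal Q_2,\mathcal Q_3,\mathcal Q_2'$, and Lemma \ref{lem3.3}(iv)) must be brought to bear; this is the technical heart of the argument.
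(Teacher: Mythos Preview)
Your argument for (i) via the block identity $t(x)=\sum_{B\ni x}(d_B(x)-1)$ is correct and is a tidier rendering of the paper's own proof, which simply asserts that a vertex common to two cycles satisfies $c(G-x)\le d_G(x)-2$ and hence $\omega(G-x)\le\omega(G)-2$; your block decomposition supplies exactly the missing justification for that assertion.

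For (ii)--(iii) your overall scheme coincides with the paper's: peel off either a pendant cycle (via Lemma~\ref{lem3.4}) or a pendant tree-piece, check that goodness is preserved, and induct. The paper organises this as a single induction on $n$, so that removing a four-vertex branch already decreases the induction parameter; you instead induct on $\omega$ and propose an inner ``strip the acyclic fringe'' loop to manufacture a pendant cycle. Your reformulation of (iii) as $\alpha_4(\Gamma_G)=\tfrac34|V_{\Gamma_G}|$ is exactly how the paper closes the argument in the final proof of Theorem~\ref{thm1.4}.

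The place where your proposal is genuinely incomplete---and you flag it honestly---is precisely the content of the paper's Case~2. What you call ``show that a pendant branch begins with a copy of $R_1$ and that deleting it drops $\alpha_4$ by $3$'' is not a single uniform statement; the paper obtains it only through a case split on the degree of the deepest quasi-pendant vertex $u$ (Subcases 2.1--2.3), in each case using Lemma~\ref{lem2.4} to force a maximum generalized $4$-independent set avoiding the branch's root, and then using the good equation to pin the branch down to exactly four vertices. Two points you should anticipate: first, in the final subcase ($d_G(u)=2$, $d_G(w)=2$) the natural piece to remove is rooted one level higher at $s$ and may have $4(b_5+1)$ vertices rather than four, so ``one $R_1$ at a time'' is not literally what happens; second, at each step you need Lemma~\ref{lem3.3}(iv) to certify that the branch's root is not on a cycle and is not the sole neighbour of a $\mathcal Q_2\cup\mathcal Q_3\cup\mathcal Q_2'$-vertex, which is how $\Gamma_{G'}=\Gamma_G-\{\text{branch}\}$ and $\mathscr C_{G'}=\mathscr C_G$ are guaranteed. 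Once you carry out that case analysis, your inner loop collapses into the paper's induction on $n$, and the two proofs become the same.
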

\begin{proof}
If $G$ is not cycle-disjoint, then there exists a vertex, say \(x\), on some cycle of \(G\) such that $c(G-x)\leq d_G(x)-2.$
This together with the fact that $|V_{G-x}|=|V_G|-1$ and $|E_{G-x}|=|E_G|-d_G(x)$ yields $\omega(G-x)\leq \omega(G)-2,$ which contradicts to Lemma \ref{lem3.3}(ii).
This completes the proof of (i).

We proceed by induction on the order $n$ of $G$ to prove (ii) and (iii). Since  $P_1,P_2,P_3$ and $C_3$ are not good, we have $n\geq4.$
If $n=4,$ then
$G\in\{ P_4,S_4, C_3^+,C_4,K_4^-,K_4\},$ where $C_3^+$ is the graph
obtained from $C_3$ by adding a pendent edge at any vertex and $K_4^-$
is the graph obtained from $K_4$ by deleting an edge. Through individual verification,
it can be concluded that only $P_4$ and $S_4$ are  good  among $\{ P_4,S_4, C_3^+,C_4,K_4^-,K_4\},$
 and  $G$ is ultimately isomorphic to $P_4$ or $S_4$. Thus, (ii)-(iii) establish obviously.
Suppose that (ii) and (iii) hold for any good graph of order smaller than $n$, and suppose $G$ is a good graph with order $n\geq 5.$

If $T_G$ is an empty graph, then $G\cong C_n.$ Thus (ii) and (iii)  follow from  the following two facts.
\begin{fact}\label{fact1}
$C_n$ is good if and only if $n\equiv 1\pmod{4}.$
\end{fact}
\begin{fact}\label{fact2}
$\alpha_4(C_n)=\frac{3}{4}(n-1)$ if $n\equiv 1\pmod{4}$.
\end{fact}

If $T_G$ contains at least one edge, then $\mathcal{P}(T_G)\neq\emptyset$. In order to complete the proof of (ii) and (iii) in this case, it suffices to consider the following two possible cases.

{\bf{Case 1.}}\ $\mathcal{P}(T_G)\cap  W_{\mathscr{C}_G}\neq\emptyset$. In this case, $G$ has a pendant cycle, say $C_q$. Let $H=G-C_q$. It follows from Lemma \ref{lem3.4}(iii) that $H$ is good. Applying the induction hypothesis to $H$ yields
\begin{wst}
\item[{\rm (a)}] the order of each cycle (if any) of $H$ is
$4k+1,$ where $k$ is a positive integer;
\item[{\rm (b)}] $\alpha_4(H)=\alpha_4(\Gamma_H)+\sum_{C\in \mathscr{C}_H}\frac{3}{4}(|V_C|-1).$
\end{wst}
Assertion (a) and Lemma \ref{lem3.4}(i) imply that the order of each cycle of $G$ is  1 modulo 4 since $\mathscr{C}_G=\mathscr{C}_{H}\cup\{C_q\}.$ Thus, (ii) holds in this case.

Combining with Lemma \ref{lem3.4}(ii) and Assertion (b) we have
\begin{eqnarray}\label{eq3.5}
\alpha_4(G)=\alpha_4(H)+\frac{3}{4}(q-1)=\alpha_4(\Gamma_H)
+\sum_{C\in \mathscr{C}_H}\frac{3}{4}(|V_C|-1)+\frac{3}{4}(q-1).
\end{eqnarray}
Note that $\Gamma_G\cong \Gamma_H$ and
$$
\sum_{C\in \mathscr{C}_H}\frac{3}{4}(|V_C|-1)+\frac{3}{4}(q-1)=\sum_{C\in \mathscr{C}_G}\frac{3}{4}(|V_C|-1).
$$
Together with (\ref{eq3.5}) we have
\begin{eqnarray*}
\alpha_4(G)=\alpha_4(\Gamma_G)+\sum_{C\in \mathscr{C}_G}\frac{3}{4}(|V_C|-1).
\end{eqnarray*}
That is to say, (iii) holds in this case.

\vspace{2mm}

{\bf{Case 2.}}\ $\mathcal{P}(T_G)\cap  W_{\mathscr{C}_G}=\emptyset$.
In this case, $\mathcal{P}(T_G)=\mathcal{P}(G)$.  Assume
$u\in V_{T_G}$ such that all of its children are leaves, and subject to this condition,
the level of $u$ is as large as possible. If $u\in W_{\mathscr{C}_G}, $ then there exists
a quasi-pendant vertex on some cycle of $G$, which is impossible by Lemma \ref{lem3.3}(iv).
Hence, $u\notin W_{\mathscr{C}_G}.$ If $l_{T_G}(u)=0,$ then $T_G\cong G\cong S_n$ and thus $\alpha_4(G)=n-1>\frac{3}{4}n$
for $n\geq5,$ contradicts to the
fact that $G$ is good. Therefore, $l_{T_G}(u)\geq1$.

\vspace{1mm}
{\bf{Subcase 2.1}}\ $d_G(u)=d_{T_G}(u)\geq4,$  then $u$ has at least three children in $T_G$.
Recall that $u\notin W_{\mathscr{C}_G}$, we get $V_{T_G^u}\cap W_{\mathscr{C}_G}=\emptyset$,
where $T_G^u$ is a subtree of $T_G$ rooted at $u$.
Let $H_1$ be the subgraph of $G$ such that
$T_{H_1}=T_G-T_G^u$ and $\mathscr{C}_{H_1}=\mathscr{C}_G.$
  Then we arrive at
\begin{eqnarray}\label{eq3.4a}
|V_{H_1}|=n-d_G(u),\ \  \omega(H_1)=\omega(G).
\end{eqnarray}
Since all children of $u$ are in $\mathcal{P}(G),$ there exists a maximum generalized $4$-independent set $S(G)$ of $G$ such that all children of $u$ are in it
by Lemma \ref{lem2.4}.   Thus,  $u\notin S(G).$ Therefore, we get
\begin{eqnarray}\label{eq3.5a}
\alpha_4(H_1)=\alpha_4(G)-d_G(u)+1.
 \end{eqnarray}
Equalities \eqref{eq3.4a}-\eqref{eq3.5a} together with the
good condition of $G$ lead to
$$
\alpha_4(H_1)=\frac{3}{4}(|V_{H_1}|-\omega(H_1))-\frac{1}{4}(d_G(u)-4).
$$
In view of Lemma \ref{lem3.1}, we get $d_G(u)=4$ and $H_1$ is good. Applying the induction hypothesis to $H_1$ implies that
\begin{wst}
\item[{\rm (c)}] the order of each cycle (if any) of $H_1$
is $4k+1,$ where $k$ is an integer;
\item[{\rm (d)}] $\alpha_4(H_1)=\alpha_4(\Gamma_{H_1})+\sum_{C\in \mathscr{C}_{H_1}}\frac{3}{4}(|V_C|-1).$
\end{wst}

Since $\mathscr{C}_G= \mathscr{C}_{H_1}$,  combining with Assertion (c) we have the order of each cycle (if any) of $G$ is  $4k+1$, where $k$ is an integer.

Note that $V_{T_G^u}\subseteq\{u\}\cup\mathcal{P}(\Gamma_G)$  and $\Gamma_{H_1}=\Gamma_G-V_{T_G^u}$. Hence, by applying Equality \eqref{eq3.5a}, Lemma \ref{lem2.4} and Assertion (d) we get
$$
\alpha_4(G)=\alpha_4(H_1)+3=\alpha_4(\Gamma_{H_1})+\sum_{C\in \mathscr{C}_{H_1}}\frac{3}{4}(|V_C|-1)+3=\alpha_4(\Gamma_G)+\sum_{C\in \mathscr{C}_G}\frac{3}{4}(|V_C|-1).
$$

\vspace{1mm}
{\bf{Subcase 2.2}}\ $d_G(u)=d_{T_G}(u)=3,$ i.e., $u\in \mathcal{Q}_3(G)$. Let $w$ be the parent of $u$ in $T_G$. In view of the proof as above, it is sufficient to consider that each child of $w$ is either a
pendant vertex or a quasi-pendant vertex of degree two or three in $T_G$. Put $A_i:=\{v\in N_{T_G^w}(w)\ \hbox{ and }\ d_{T_G^w}(v)=i\}$ and $a_i:=|A_i|$ for $i\in\{1,2,3\}.$
If $w\in W_{\mathscr{C}_G},$ then there exists a  vertex adjacent to $u\in \mathcal{Q}_3(G)$ on some cycle of $G$, which contradicts to  Lemma \ref{lem3.3}(iv). Therefore,  $w\notin W_{\mathscr{C}_G}$.
Since  $V_{T_G^w}\subseteq\{w\}\cup\mathcal{P}(T_G)\cup\mathcal{Q}(T_G),$ again by Lemma \ref{lem3.3}, $V_{T_G^w}\cap W_{\mathscr{C}_G}=\emptyset$.
Let $H_2$ be the subgraph  of $G$ such that $T_{H_2}=T_G-T_G^w$ and $\mathscr{C}_{H_2}=\mathscr{C}_G$.  Then

\begin{eqnarray}\label{eq3.4b}
|V_{H_2}|=n-a_1-2a_2-3a_3-1,\ \ \omega(H_2)=\omega(G).
\end{eqnarray}
In view of  Lemma \ref{lem2.4}, there exists a maximum generalized $4$-independent set $S(G)$
of $G$ such that $\mathcal{P}(G)\subseteq S(G).$ If $w\notin S(G),$ then $V_{T_G^w}\setminus \{w\}\subseteq S(G)$ and thus $S(G)\setminus\{V_{T_G^w}\setminus \{w\}\}$ forms a maximum generalized $4$-independent set
of $H_2$. If $w\in S(G),$ then $S(G)\cap(A_2\cup A_3)=\emptyset.$ This together with $a_3\geq1$
means $\left(S(G)\setminus \{w\}\right)\cup A_2\cup A_3$ is a maximum generalized $4$-independent set of $G,$
and then $S(G)\setminus\{V_{T_G^w}\setminus \{w\}\}$ still forms a maximum generalized $4$-independent set
of $H_2$. Therefore, we  can conclude
\begin{eqnarray}\label{eq3.5b}
\alpha_4(H_2)=\alpha_4(G)-a_1-2a_2-3a_3.
 \end{eqnarray}
Recall that $G$ is good,  \eqref{eq3.4b}-\eqref{eq3.5b} yield
$$
\alpha_4(H_2)=\frac{3}{4}(|V_{H_2}|-\omega(H_2))-\frac{1}{4}(a_1+2a_2+3a_3-3).
$$
Since $u\in A_3,$ we have $a_3\geq1.$ It follows from
Lemma \ref{lem3.1} that  $a_1=a_2=0, a_3=1$ and $H_2$ is good.
By applying  the induction hypothesis to $H_2,$ it can be  concluded that
\begin{wst}
\item[{\rm (e)}] the order of each cycle (if any) of $H_2$
is $4k+1,$ where $k$ is an integer;
\item[{\rm (f)}] $\alpha_4(H_2)=\alpha_4(\Gamma_{H_2})+\sum_{C\in \mathscr{C}_{H_2}}\frac{3}{4}(|V_C|-1).$
\end{wst}

The fact that $\mathscr{C}_G= \mathscr{C}_{H_2}$ together with Assertion (e) implies the order of each cycle (if any) of $G$ is  $4k+1$, where $k$ is an integer.

Note that $V_{T_G^w}\subseteq\{w\}\cup\mathcal{P}(\Gamma_G)\cup\mathcal{Q}_3(\Gamma_G)$  and $\Gamma_{H_2}=\Gamma_G-V_{T_G^w}$. Combining Equality \eqref{eq3.5b}, Lemma \ref{lem2.4} and Assertion (f) we get
$$
\alpha_4(G)=\alpha_4(H_2)+3=\alpha_4(\Gamma_{H_2})+\sum_{C\in \mathscr{C}_{H_2}}\frac{3}{4}(|V_C|-1)+3=\alpha_4(\Gamma_G)+\sum_{C\in \mathscr{C}_G}\frac{3}{4}(|V_C|-1).
$$

\vspace{1mm}
{\bf{Subcase 2.3}}\ $d_G(u)=d_{T_G}(u)=2,$ i.e., $u\in \mathcal{Q}_2(G)$. Let $w$ be the parent of $u$ in $T_G$. In view of the proof as above, it is sufficient to consider that each child of $w$ is either a
pendant vertex or a quasi-pendant vertex of degree two in $T_G$. Let $A_i$ and $a_i$ be defined as before for $i\in\{1,2\}.$
If $w\in W_{\mathscr{C}_G},$ then there exists a  vertex adjacent to $u\in \mathcal{Q}_2(G)$ on some cycle of $G$, leading to a contradiction  by Lemma \ref{lem3.3}(iv). Whence,  $w\notin W_{\mathscr{C}_G}$.

If $d_G(w)\geq3,$ then $a_1+a_2\geq2.$ In a similar way as above, we have $V_{T_G^w}\cap W_{\mathscr{C}_G}=\emptyset$. Let $H_3$ be the subgraph of $G$ such that
$T_{H_3}=T_G-T_G^w$ and $\mathscr{C}_{H_3}=\mathscr{C}_G$. Then
\begin{eqnarray}\label{eq3.4c}
|V_{H_3}|=n-a_1-2a_2-1,\ \ \omega(H_3)=\omega(G).
\end{eqnarray}
Analogous to the discussion before \eqref{eq3.5b}, there always exists a maximum
generalized $4$-independent set $S(G)$
of $G$ such that $w\notin S(G)$ and then  $S(G)\setminus\{V_{T_G^w}\setminus \{w\}\}$  forms a maximum generalized $4$-independent set
of $H_3$. Hence, we obtain
\begin{eqnarray}\label{eq3.5c}
\alpha_4(H_3)=\alpha_4(G)-a_1-2a_2.
 \end{eqnarray}
Recall that $G$ is good, then it follows from \eqref{eq3.4c}-\eqref{eq3.5c} that
$$
\alpha_4(H_3)=\frac{3}{4}(|V_{H_3}|-\omega(H_3))-\frac{1}{4}(a_1+2a_2-3).
$$
The above equality together with Lemma \ref{lem3.1} and $u\in A_2$ forces $a_1=a_2=1$ and thus $H_3$ is good.
Applying the induction hypothesis to $H_3$ leads to
\begin{wst}
\item[{\rm (g)}] the order of each cycle (if any) of $H_3$
is $4k+1,$ where $k$ is an integer;
\item[{\rm (h)}] $\alpha_4(H_3)=\alpha_4(\Gamma_{H_3})+\sum_{C\in \mathscr{C}_{H_3}}\frac{3}{4}(|V_C|-1).$
\end{wst}

Combining with Assertion (g) and  $\mathscr{C}_G= \mathscr{C}_{H_3}$, we know the order of each cycle (if any) of $G$ is  $4k+1$, where $k$ is an integer.

It is routine to check that $V_{T_G^w}\subseteq\{w\}\cup\mathcal{P}(\Gamma_G)\cup\mathcal{Q}_2(\Gamma_G)$  and $\Gamma_{H_3}=\Gamma_G-V_{T_G^w}$. Then Equality \eqref{eq3.5c}, Lemma \ref{lem2.4} and Assertion (h) lead to
$$
\alpha_4(G)=\alpha_4(H_3)+3=\alpha_4(\Gamma_{H_3})+\sum_{C\in \mathscr{C}_{H_3}}\frac{3}{4}(|V_C|-1)+3=\alpha_4(\Gamma_G)+\sum_{C\in \mathscr{C}_G}\frac{3}{4}(|V_C|-1).
$$

If $d_G(w)=2,$ then let $s$ be the parent of $w$ in $T_G$. In view of the proof as above, it is sufficient to consider that $T_G^s\cong O_{\cup_{k\geq2}(b_kS_k)\cup bP_4}.$ In a similar way, we have $V_{T_G^s}\cap W_{\mathscr{C}_G}=\emptyset.$ Let $H_4$ be the subgraph of $G$ such that $T_{H_4}=T_G-T_G^s$
and $\mathscr{C}_{H_4}=\mathscr{C}_G$. Then
\begin{eqnarray}\label{eq3.4d}
|V_{H_4}|=n-\sum_{k\geq2}(k-1)b_k-3b-1,\ \ \omega(H_4)=\omega(G).
\end{eqnarray}
Again, through discussion similar to that preceding \eqref{eq3.5b}, we know
\begin{eqnarray}\label{eq3.5d}
\alpha_4(H_4)=\alpha_4(G)-b_2-2b_3-3b_4-\sum_{k\geq5}(k-2)b_k-3b.
 \end{eqnarray}
Equalities \eqref{eq3.4d}-\eqref{eq3.5d} together with the fact that $G$ is good yield
$$
\alpha_4(H_4)=\frac{3}{4}\left(|V_{H_4}|-\omega(H_4)\right)-\frac{1}{4}\left(b_2+2b_3+3b_4+\sum_{k\geq5}(k-5)b_k+3b-3\right).
$$
Note that $b\geq1$, then by Lemma \ref{lem3.1}, one has $b=1$ and $ b_k=0$ for $k\geq2$ and $k\neq 5.$  Thus,  $H_4$ is good.
Applying the induction hypothesis to $H_4$ implies that
\begin{wst}
\item[{\rm (i)}] the order of each cycle (if any) of $H_4$
is $4k+1,$ where $k$ is an integer;
\item[{\rm (j)}] $\alpha_4(H_4)=\alpha_4(\Gamma_{H_4})+\sum_{C\in \mathscr{C}_{H_4}}\frac{3}{4}(|V_C|-1).$
\end{wst}
Assertion (i) and  $\mathscr{C}_G= \mathscr{C}_{H_4}$ lead to  the order of each cycle (if any) of $G$ is  $4k+1$, where $k$ is an integer.

Note that $\Gamma_{H_4}=\Gamma_G-V_{T_G^s}$. Then it follows from  \eqref{eq3.5d}, Lemma \ref{lem2.4} and Assertion (j) that
$$
\alpha_4(G)=\alpha_4(H_4)+3b_5+3=\alpha_4(\Gamma_{H_4})+\sum_{C\in \mathscr{C}_{H_4}}\frac{3}{4}(|V_C|-1)+3b_5+3=\alpha_4(\Gamma_G)+\sum_{C\in \mathscr{C}_G}\frac{3}{4}(|V_C|-1),
$$
as desired.
\end{proof}

With the help of the above lemmas, we are ready to prove Theorem \ref{thm1.4} as  follows. \vspace{2mm}

\noindent{\bf Proof of Theorem \ref{thm1.4}.}\ Inequality \eqref{eq1.1} has already established by
Lemma~\ref{lem3.1}. We now characterize all the graphs which attain the lower bound by considering the sufficient and necessary conditions for the equality in \eqref{eq1.1}.

For ``sufficiency", Assertion (i) and Lemma
\ref{lem2.3}(iii) imply that $G$ has exactly $\omega(G)$
cycles, that is,
\begin{eqnarray}\label{eq3.10}
|\mathscr{C}_G|=\omega(G).
\end{eqnarray}
Assertion (ii) together with Lemma \ref{lem2.2} yields
\begin{eqnarray}\label{eq3.11}
\alpha_4(C)=\frac{3}{4}(|V_C|-1)
\end{eqnarray}
for any $C\in \mathscr{C}_G.$
Combining Assertion (iii) and
Theorem \ref{thm1.2}, we have
\begin{eqnarray}\label{eq3.12}
\alpha_4(\Gamma_G)=\frac{3}{4}|V_{\Gamma_G}|.
\end{eqnarray}
Note that the graph
$\bigcup_{C\in \mathscr{C}_G}C \bigcup \Gamma_G$ can be obtained from
$G$ by removing some edges. By Lemma~\ref{lem2.1}(ii) and \eqref{eq3.10}-\eqref{eq3.12}, we get
\begin{eqnarray*}
\alpha_4(G)&\leq& \alpha_4(\Gamma_G)+\sum_{C\in \mathscr{C}_G}\alpha_4(C)\\
&=&\frac{3}{4}|V_{\Gamma_G}|+\sum_{C\in\mathscr{C}_G}\frac{3}{4}(|V_C|-1)\\
&=&\frac{3}{4}\left(|V_{\Gamma_G}|+\sum_{C\in\mathscr{C}_G}|V_C|\right)-\frac{3}{4}\omega(G)\\
&=&\frac{3}{4}[n-\omega(G)].
\end{eqnarray*}
Therefore, $\alpha_4(G)=\frac{3}{4}[n-\omega(G)]$ by Lemma \ref{lem3.1}.

\vspace{2mm}
For ``necessity", let $G$ be a good graph. By Lemma \ref{lem3.6}, the cycles (if any) of $G$ are pairwise vertex-disjoint, and the order of each  cycle (if any) of $G$ is 1 modulo 4. This implies (i) and (ii). The good condition of $G$ together with \eqref{eq3.10}-\eqref{eq3.11} yields
\begin{eqnarray*}
\alpha_4(G)&=&\frac{3}{4}(n-\omega(G))\\
&=&\frac{3}{4}\left(|V_{\Gamma_G}|+\sum_{C\in\mathscr{C}_G}|V_C|-|\mathscr{C}_G|\right)\\
&=&\frac{3}{4}|V_{\Gamma_G}|+\sum_{C\in\mathscr{C}_G}\frac{3}{4}(|V_C|-1).
\end{eqnarray*}
Applying Lemma \ref{lem3.6}(iii), we get
$\alpha_4(\Gamma_G)=\frac{3}{4}|V_{\Gamma_G}|,$ which implies
(iii) by Theorem \ref{thm1.2}. \qed

\section{\normalsize Conclusion}\setcounter{equation}{0}\label{sec4}

For a graph $G$ with $n$ vertices, $m$ edges, $k$ components, and $c_1$ induced cycles of length 1 modulo 3, Bock et al. \cite{BP23a} showed that $\alpha_3(G)\geq n-\frac{1}{3}(m+k+c_1)$, the extremal graphs in which every two cycles are vertex-disjoint were identified.
In this paper, for a general graph $G$
with $n$ vertices, it is   proved that $\alpha_4(G)\geq \frac{3}{4}(n-\omega(G))$, where $\omega(G)$ denotes the dimension of the cycle space of $G$.  The  graph $G$ whose generalized $4$-independence number attains the lower bound are characterized completely. It is natural and interesting to investigate the  sharp lower bound of the generalized $k$-independence number for general integer  $k\geq2.$
We finish this section by proposing the following conjecture.

For an integer $k\geq2,$ constructing a sequence of trees $R_i$ with order $ki$ as follows:
\begin{wst}
\item[{\rm (i)}] $R_1$ is a $k$-tree;
\item[{\rm (ii)}] If $i\geq 2,$ then $R_{i}$ is obtained by adding an edge to connect a vertex of one member of  $R_{i-1}$ and a vertex of  a $k$-tree.
\end{wst}

\begin{conj}\label{conj1}
Let $G$ be an $n$-vertex  simple graph with the dimension of cycle space $\omega(G)$. Then
\begin{equation*}
\alpha_k(G)\geq \frac{k-1}{k}[n-\omega(G)]
\end{equation*}
with equality if and only if all the following conditions hold for $G$
\begin{wst}
\item[{\rm (i)}] the cycles (if any) of $G$ are pairwise vertex-disjoint;
\item[{\rm (ii)}] the order of each cycle (if any) of $G$ is 1 modulo $k;$
\item[{\rm (iii)}] each component, say $T,$  of $\Gamma_G$ satisfies $|V_T|\equiv 0\pmod{k}$ and $T\in R_{\frac{|V_T|}{k}}.$
\end{wst}
\end{conj}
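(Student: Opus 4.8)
The plan is to follow, step by step, the architecture used for $k=4$ in the proof of Theorem~\ref{thm1.4}, verifying at each stage that nothing genuinely exploited the value $4$. First I would assemble the $k$-analogues of the preliminary tools. The deletion inequalities of Lemma~\ref{lem2.1} hold verbatim for every $k\geq 2$, since removing a vertex deletes at most one element of a maximum generalized $k$-independent set and removing an edge can never create a $k$-tree inside an induced subgraph. Lemma~\ref{lem2.4} also transfers, because adjoining or swapping in a pendant vertex cannot produce a $k$-tree when $k\geq 2$. The path and cycle counts of Lemma~\ref{lem2.2} become $\alpha_k(P_n)=\lceil\frac{k-1}{k}n\rceil$ and $\alpha_k(C_n)=\lfloor\frac{k-1}{k}n\rfloor$: an induced subforest of a path or cycle is $k$-tree-free exactly when each of its path components has at most $k-1$ vertices, and the extremal pattern keeps $k-1$ of every $k$ consecutive vertices. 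In particular $C_n$ satisfies $\alpha_k(C_n)=\frac{k-1}{k}(n-1)$ precisely when $n\equiv 1\pmod{k}$, which is exactly condition~(ii).

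The real prerequisite is the tree base case: every tree $T$ on $n$ vertices should satisfy $\alpha_k(T)\geq\frac{k-1}{k}n$, with equality if and only if $n\equiv 0\pmod k$ and $T\in R_{n/k}$. This is the $k$-general form of Theorems~\ref{thm1.1} and~\ref{thm1.2}, and I expect it to be the hardest ingredient, since the Li--Zhou argument and the building blocks $\{P_4,S_4\}$ are tuned to $k=4$; for general $k$ the role of $R_1$ is played by \emph{all} trees on $k$ vertices, a much richer family, so the extremal analysis must control many more attachable branches. Granting this tree result, the inequality generalizes immediately through the induction of Lemma~\ref{lem3.1} on $\omega(G)$: the base $\omega=0$ is the tree bound, and the inductive step uses only $\alpha_k(G)\geq\alpha_k(G-x)$ together with $\omega(G)\geq\omega(G-x)+1$ for a vertex $x$ on a cycle, both of which are independent of $k$.

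For the equality characterization I would reproduce Lemmas~\ref{lem3.2}--\ref{lem3.6} with $\tfrac34$ replaced by $\tfrac{k-1}{k}$. Additivity over components (Lemma~\ref{lem3.2}) and the forced equalities upon deleting a cycle vertex (Lemma~\ref{lem3.3}) are structural and transfer directly; the only change is that the ad hoc families $\mathcal{Q}_2,\mathcal{Q}_3,\mathcal{Q}'_2$ must be replaced by the single general obstruction that deleting $x$ may not split off any non-good component, using that the smallest good connected graph has exactly $k$ vertices (realized by any $k$-vertex tree). The pendant-cycle reduction (Lemma~\ref{lem3.4}) yields $q\equiv 1\pmod k$ and $\alpha_k(G)=\alpha_k(H)+\frac{k-1}{k}(q-1)$ in the same manner. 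The core is the $k$-analogue of Lemma~\ref{lem3.6}, proved by induction on $n$ via the shrunken tree $T_G$: one locates a deepest parent $u$ whose children are leaves, argues $u\notin W_{\mathscr{C}_G}$, peels off the branch $T_G^u$ to obtain a smaller good graph $H$ with $\mathscr{C}_H=\mathscr{C}_G$ and $\Gamma_H=\Gamma_G-V_{T_G^u}$, and thereby establishes the decomposition $\alpha_k(G)=\alpha_k(\Gamma_G)+\sum_{C\in\mathscr{C}_G}\frac{k-1}{k}(|V_C|-1)$, which combined with the tree base case gives condition~(iii).

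The main obstacle, beyond the tree base case itself, is precisely this branch-peeling step. For $k=4$ the case split (Subcases 2.1--2.3 and the graph $O_{\cup_{k\geq2}(b_kS_k)\cup bP_4}$) exhausts a short list of local shapes, whereas for general $k$ the admissible pendant subtrees hanging at $u$ or its ancestors proliferate with $k$, so one must prove a \emph{sharp local extremal lemma}: any branch whose total ``cost'' exceeds that of a single $k$-vertex tree strictly depresses $\alpha_k$ below $\frac{k-1}{k}(|V|-\omega)$, forcing every peeled branch to be a member of $R_1$. Once this local lemma is secured, sufficiency follows by the upper bound $\alpha_k(G)\leq\alpha_k(\Gamma_G)+\sum_{C}\alpha_k(C)$ obtained from the edge deletions of Lemma~\ref{lem2.1}(ii) and matched against the lower bound, while necessity follows by reading conditions (i)--(iii) off the structural lemma and the tree characterization, exactly as in the proof of Theorem~\ref{thm1.4}.
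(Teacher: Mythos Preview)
The statement you are attempting to prove is \emph{Conjecture~\ref{conj1}}, not a theorem: the paper does not prove it and explicitly leaves it open. In the concluding section the authors write that their method ``becomes infeasible for Conjecture~\ref{conj1} when $k$ is sufficiently large,'' and they pinpoint the obstruction precisely where you do, namely in the branch-peeling case analysis of Lemma~\ref{lem3.6} (the authors call it Lemma~3.5 there), where ``increasing values of $k$ induce a combinatorial proliferation of subcases.'' So there is no paper proof to compare against.

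Your proposal accurately mirrors the $k=4$ architecture and correctly isolates the two genuine gaps: the tree base case $\alpha_k(T)\geq\frac{k-1}{k}n$ with its extremal characterization, and the ``sharp local extremal lemma'' controlling the shape of peeled branches. However, you do not prove either of these; you \emph{grant} the tree result and merely \emph{state} that the local lemma must be secured. These are exactly the points at which the paper's authors gave up, so your proposal is a roadmap rather than a proof. In particular, the tree base case for general $k$ is not in the literature (Theorem~\ref{thm1.1} is specific to $k=4$), and the local peeling analysis is not a routine bookkeeping exercise: for $k=4$ the argument already required the auxiliary families $\mathcal{Q}_2,\mathcal{Q}_3,\mathcal{Q}'_2$ and the template graph $O_{\cup_{j\geq2}(b_jS_j)\cup bP_4}$, and for general $k$ the analogue of $R_1$ is the set of \emph{all} $k$-vertex trees, so the number of admissible pendant configurations at depth up to $k-1$ grows without bound. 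Until those two ingredients are actually supplied, the conjecture remains open.
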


It is important to emphasize that our method becomes infeasible for Conjecture  \ref{conj1}  when $k$ is  sufficiently large. The primary difficulty arises in the proof of Lemma~3.5, specifically in Case~2. Specifically, we analyze three distinct cases based on the value of $d_G(u)$, namely, $d(u) \geq 4$, $d(u) = 3$, and $d(u) = 2$. While the structure of the rooted tree $T_G^w$ is well-defined in each of these individual cases, increasing values of $k$ induce a combinatorial proliferation of subcases. Consequently, the structural complexity of $T_G^w$ escalates beyond feasible characterization.

\section*{\normalsize Acknowledgment}

The authors would like to express their sincere gratitude to both of the referees
for a very careful reading of this paper and for all their insightful comments, which led to a number of improvements.

\end{document}